\newtheorem{theorem}{Theorem}
\newtheorem{conjecture}[theorem]{Conjecture}
\theoremstyle{definition}
\newtheorem{definition}[theorem]{Definition}
\newtheorem{corollary}[theorem]{Corollary}
\newtheorem{example}[theorem]{Example}
\theoremstyle{remark}
\newtheorem{remark}{Remark}
\theoremstyle{plain} 
 \newtheorem*{mques}{Motivating Questions}
  \newtheorem{strat}{Strategy}
\newcommand{\Z}{\mathbb{Z}}
\newcommand{\R}{\mathbb{R}}
\newcommand{\F}{\mathbb{F}}
\newcommand{\alg}{\mathcal{A}}
\newcommand{\e}{\epsilon}
\newcommand{\leg}{\Lambda}
\begin{document}

\title{Constructions of Lagrangian Cobordisms}

\author[]{Sarah Blackwell} \address{ } \email{} 
\author[]{No\'emie Legout } \address{ } \email{} 
\author[]{Caitlin Leverson } \address{ } \email{} 
 \author[]{Ma\"ylis Limouzineau}
 \author[]{Ziva Myer}
 \author[]{Yu Pan}
 \author[]{Samantha Pezzimenti}
 \author[]{Lara Simone Su\'arez }
 \author[]{Lisa Traynor}

\keywords{}

\begin{abstract} Lagrangian cobordisms between Legendrian knots arise in Symplectic Field Theory and impose an interesting and not well-understood
relation on Legendrian knots. 
There are some known ``elementary'' 
building blocks for Lagrangian cobordisms that  are smoothly the attachment of $0$- and $1$-handles.
An important question is whether every pair of non-empty Legendrians that are related by a connected Lagrangian
cobordism can be related by a ribbon Lagrangian cobordism, in particular one that is ``decomposable'' into a composition of  these elementary building blocks. We will describe 
these and other combinatorial building blocks as well as some geometric methods, involving the theory of satellites, to construct Lagrangian cobordisms. We will then survey some known
results, derived through Heegaard Floer Homology and contact surgery, that may provide a pathway to proving the existence of nondecomposable (nonribbon) Lagrangian cobordisms.
 \end{abstract}

\maketitle

\section{Introduction}

A contact manifold is an odd-dimensional manifold $Y^{2n+1}$ together with a 
maximally non-integrable hyperplane distribution $\xi$.  In a contact manifold, {\it Legendrian submanifolds} play a central role. 
These are the maximal integral submanifolds of $\xi$: $\leg^{n}$ such that  $T_{p}\leg \subset \xi$, for all $p  \in \Lambda$. 
In general, Legendrian submanifolds are plentiful and easy to construct.  
In this article we will restrict our attention to the contact manifold $\R^3$ with its standard contact structure 
$\xi = \ker \alpha$, where $\alpha = dz - ydx$.  In this setting, 
every smooth knot  or link has an infinite number of non-equivalent Legendrian 
representatives. More background on Legendrian knots is given in Section~\ref{sec:background}.

The even-dimensional siblings of contact manifolds  are symplectic manifolds.  These are even-dimensional manifolds $M^{2n}$
equipped with a closed, non-degenerate $2$-form $\omega$. In symplectic manifolds, {\it Lagrangian submanifolds} play a central role.  Lagrangian submanifolds are the maximal dimensional submanifolds where $\omega$ vanishes on the tangent spaces: $L^{n}$ such that $\omega|_{L} = 0$.
  When the symplectic manifold is exact, $\omega = d \lambda$, it is important
to understand the more restrictive subset of {\it exact} Lagrangians: these are submanifolds where $\lambda|_{L}$ 
is an exact $1$-form.  Geometrically, $L$ exact means that for any closed curve $\gamma \subset L$, $\int_\gamma \lambda = 0$.
 In this article, we will restrict
our attention to a symplectic manifold that is symplectomorphic to $\R^{4}$ with its standard symplectic
structure $\omega_{0} = \sum dx_{i} \wedge dy_{i}$.  In contrast to
Legendrians, Lagrangians are scarce.  For example, in $\R^4$ with its standard symplectic
structure, the torus is the only closed surface that will admit a Lagrangian embedding
into $\R^4$.    A famous theorem of Gromov  \cite{Gro} states that there are {\it no} closed, exact Lagrangian submanifolds of $\R^4$.

 There has been a great deal of
recent interest in a certain class of non-closed, exact Lagrangian submanifolds, known as
{\it Lagrangian cobordisms}.  These Lagrangian submanifolds live in the symplectization
of a contact manifold and have cylindrical ends over Legendrians.  In this article,
we will focus on exact, orientable Lagrangian cobordisms from the Legendrian $\Lambda_-$ to the Legendrian $\Lambda_+$ that live in the symplectization of $\R^{3}$; this
symplectization is $\R \times \R^{3}$ equipped with the exact symplectic form $\omega = d(e^{t} \alpha)$, where $t$ is the coordinate on $\R$ and  $\alpha = dz - ydx$ is the standard contact form on $\R^{3}$.
See Figure~\ref{fig:cob} for a schematic
picture of a Lagrangian cobordism  and Definition~\ref{defn:cobordism} for a formal definition.  Such Lagrangian cobordisms were first introduced 
in Symplectic Field Theory (SFT) \cite{EGH}: in relative SFT,  we get a category whose objects are Legendrians and
whose morphisms are Lagrangian cobordisms. {\it Lagrangian fillings} occur when $\leg_{-} = \emptyset$ and are key objects in the Fukaya category, which is an important invariant of symplectic four-manifolds. 
A {\it Lagrangian cap} occurs when $\leg_{+} = \emptyset$.  

A   basic question  tied to understanding the general existence and behavior of Lagrangian submanifolds
   is to understand the existence of Lagrangian cobordisms: {\it Given two Legendrians
 $\leg_\pm$, when does there exist a Lagrangian cobordism from $\leg_-$ to $\leg_+$?}
 There are known to be a number of obstructions to this relation on Legendrian submanifolds coming from both  classical and 
 non-classical invariants of the Legendrians $\leg_\pm$. Some of these obstructions are described in Section~\ref{ssec:obstructions}.
 To complement the obstructions,  there are some known constructions.  For example,  it is well known \cite{EliGro, Cha, EkHoKa} that there exists a Lagrangian
 cobordism between Legendrians $\leg_\pm$ that differ by Legendrian isotopy.
 In addition, by  
 \cite{EkHoKa, C:slice},
 it is known that there exists a Lagrangian cobordism from $\leg_{-}$ to $\leg_{+}$  if
 $\leg_-$ can be obtained from $\leg_{+}$ by  a ``pinch'' move or 
 if $\leg_{+} = \leg_{-} \cup U$, where $U$ denotes a Legendrian unknot with maximal Thurston-Bennequin number of $-1$
  that is contained in the complement of a ball containing $\leg_{-}$.
 Topologically, between these slices, the cobordism changes by a saddle move (1-handle) and the addition of a  local minimum (0-handle); see Figure~\ref{fig:pinch-min}.
 It is important to notice that there is {\it not} an elementary move corresponding to a local maximum (2-handle) move.
 By stacking these individual cobordisms obtained from isotopy, saddles, and minimums, one obtains
 what is commonly referred to as a  {\it decomposable} Lagrangian cobordism.   Through these moves, it is easy to construct Lagrangian cobordisms
 and fillings; see an example in  Figure~\ref{fig:tref}.

 \begin{figure}[!ht]
 \labellist
 \small
 \pinlabel $(A)$ at 45 -10
 \pinlabel $(B)$ at 235 -10
 \pinlabel $\Lambda_+$ at -10 100
 \pinlabel $\Lambda_-$ at -10 20
 \pinlabel $\Lambda_+$ at 170 110
 \endlabellist
\includegraphics[width=1in]{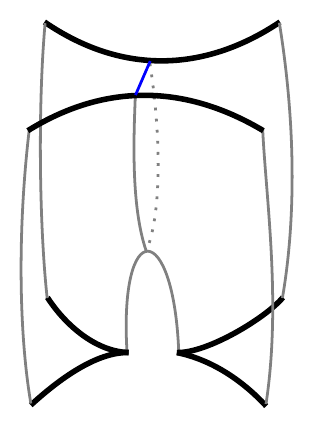}\hspace{1in}
\includegraphics[width=1in]{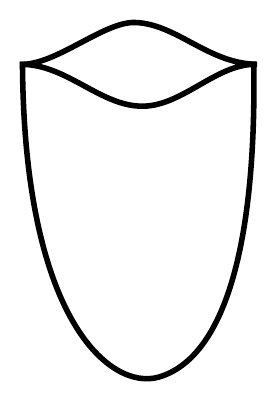}
\vspace{0.1in}

\caption{(A) The pinch move on $\leg_{+}$  produces a Lagrangian saddle.  (B) $\leg_{+}$ obtained by introducing an unknotted component to $\leg_{-}$
  corresponds to the Lagrangian cobordism having a local min.}
\label{fig:pinch-min}
\end{figure}

 Towards understanding the existence of Lagrangians, it is natural to ask:
{\it Does there exist a Lagrangian cobordism from $\leg_{-}$ to $\leg_{+}$ if and only if there exists a decomposable Lagrangian cobordism from $\leg_{-}$ to $\leg_{+}$?}
We know the answer to this question is ``No'':  by studying the ``movies'' of the not necessarily Legendrian slices of a Lagrangian.
Sauvaget, Murphy, and Lin \cite{Sau, Lin} have shown that there exists a genus two Lagrangian cap of 
 the Legendrian unknot with Thurston-Bennequin number equal to $-3$ and rotation number $0$.  The {\it Lagrangian diagram moves} used by \cite{Lin} to
 construct a  Lagrangian cap are described in Section~\ref{ssec:lin}.
The necessity of a local maximum when $\leg_{+} \neq \emptyset$ is not currently understood.

To formulate some precise motivating questions, we will use 
{\it ribbon cobordism} to denote a $2n$-dimensional manifold that can be built from $k$-handles with $k \leq n$.  This idea of restricting the handle index is well known in symplectic topology:
Eliashberg \cite{CE, Oan} has shown that any $2n$-dimensional Stein manifold admits a handle decomposition with handles of dimension at most $n$, and thus any $2n$-dimensional  Stein cobordism between closed, $(2n-1)$-dimensional contact manifolds must be ribbon.  Working in the relative setting  with submanifolds and using the handle decomposition from 
the ``height'' function given
by the $\mathbb R$ coordinate on $\mathbb R \times \mathbb R^{3}$,
we see that all decomposable $2$-dimensional Lagrangian cobordisms between $1$-dimensional Legendrian submanifolds are ribbon cobordisms. 
We are led to the following natural questions.

\begin{mques}  Suppose $\leg_{+} \neq \emptyset$ and there exists a connected Lagrangian cobordism $L$ from $\leg_{-}$ to $\leg_{+}$.  Then:
 \begin{enumerate}
\item   \label{ques:decomp} Does there exist a decomposable Lagrangian cobordism from $\leg_{-}$ to $\leg_{+}$? 
\item   Does there exist a ribbon Lagrangian cobordism from $\leg_{-}$ to $\leg_{+}$?
\item Is $L$ Lagrangian isotopic to a ribbon and/or decomposable Lagrangian cobordism?
\end{enumerate}
\end{mques}

There are some results known about Motivating Question (3) for the special case of the simplest Legendrian unknot.
If $U$ denotes the Legendrian unknot with Thurston-Bennequin number $-1$,  it is known
that every (exact) Lagrangian filling is orientable \cite{Ritter},  and there is a unique (exact, orientable) Lagrangian filling of $U$ up to compactly supported Hamiltonian isotopy \cite{EP}.  
Moreover,  any Lagrangian cobordism from $U$  to $U$ is 
Lagrangian isotopic, via a compactly supported Hamiltonian isotopy, to one in a countable collection given by the trace of a Legendrian isotopy induced by a rotation \cite{CDRGG-concord}.

Motivating Questions (1) and (2) are closely related and have deep ties to important questions in topology.
Observe that a ``yes'' answer to (1) implies a ``yes'' to (2):
 if the existence of a Lagrangian cobordism
implies the existence of a decomposable Lagrangian cobordism, then we also know the existence of a ribbon cobordism.
  Also note that when $\leg_{+}$ is topologically a slice knot and $\leg_{-} = \emptyset$, (2) is a symplectic version of the topological Slice-Ribbon conjecture:
 {is every Lagrangian slice disk a ribbon disk?} 
 Cornwell, Ng, and Sivek conjecture that the answer to Motivating Question (1) and (3) is ``No'': 
using the theory of satellites, we know that there is a Lagrangian concordance between $\leg_{\pm}$ shown in Figure~\ref{fig:satellite}, and in \cite[Conjecture 3.3]{CNS}
it is 
conjectured that the concordance between the pair is not decomposable.

\begin{figure}[!ht]
\labellist
\pinlabel $\Lambda_+$ at -20 500
\pinlabel $\Lambda_-$ at -20 100
\endlabellist

\includegraphics[width=2in]{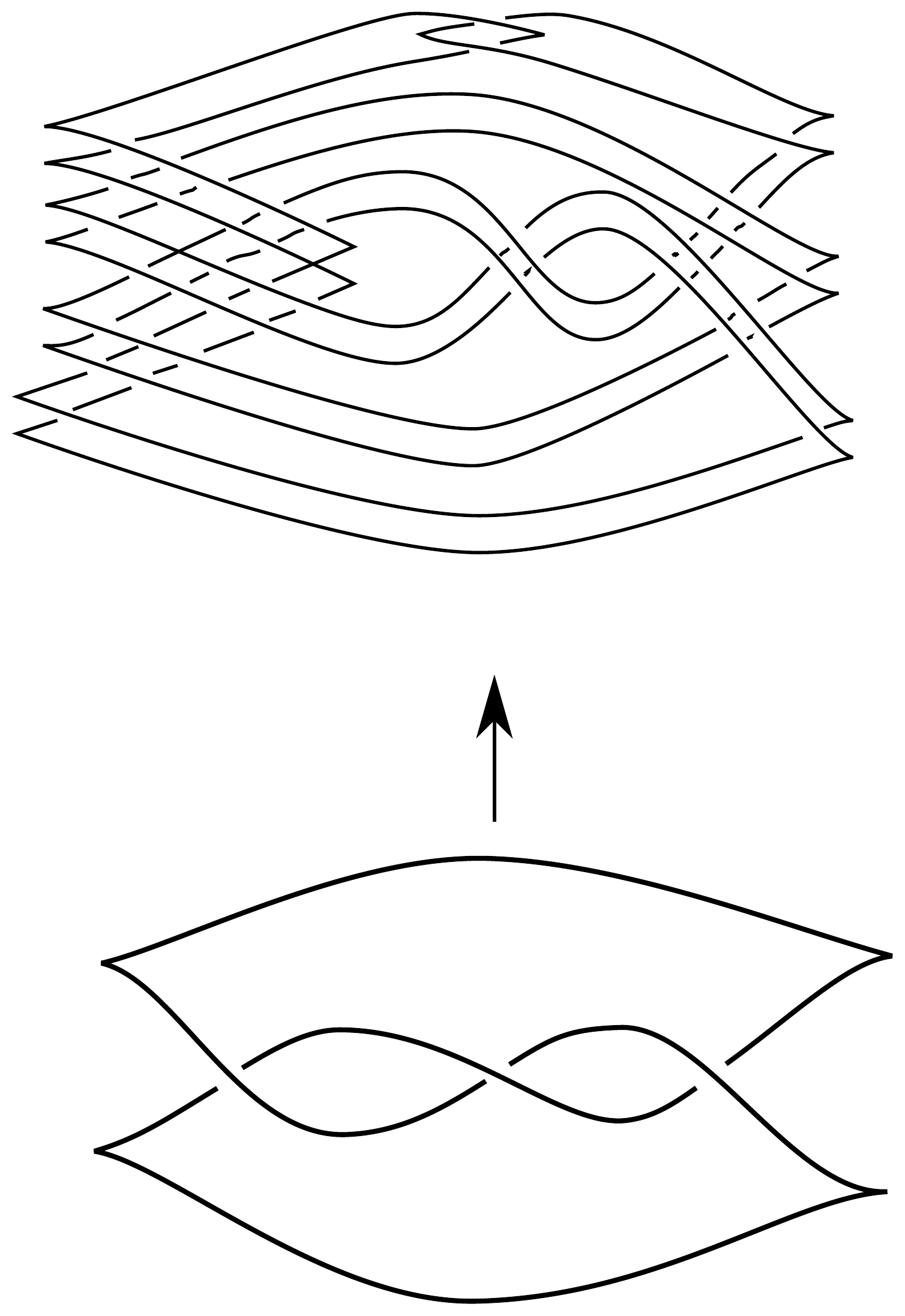}

\caption{There is a Lagrangian concordance between these Legendrian knots that is conjectured to be non-decomposable. Here
$\leg_{-}$ is a Legendrian trefoil  and $\leg_{+}$ is a Legendrian  Whitehead double of  $m(9_{46})$.
}
\label{fig:satellite}
\end{figure}

 Very recently, Roberta Guadagni has discovered additional combinatorial moves that can be used to construct a ``movie,'' meaning a sequence of slice pictures, of a Lagrangian cobordism; Figure~\ref{fig:R-move} illustrates one of these tangle moves. 
With one of Guadagni's moves, it is possible to construct a movie of a Lagrangian cobordism between the Legendrians pictured in Figure~\ref{fig:satellite}; see Figure~\ref{fig:guad-movie}.
Guadagni's moves are ``geometric'': they are developed through proofs similar to those used in 
the satellite procedure, and thus the handle attachments involved in the cobordism are not obvious.
In particular, at this point it is not known if Guadagni's tangle moves are independent from the decomposable moves.

This survey article is organized as follows.
In Section ~\ref{sec:background}, we  provide some background on Legendrians and Lagrangians, formally define Lagrangian cobordisms,
 and summarize known obstructions to the existence of Lagrangian cobordisms.  In Section~\ref{sec:construct}, we describe three ``combinatorial''  ways
 to construct Lagrangian cobordisms, and in Section~\ref{sec:satellites}, we describe  more abstract ``geometric'' ways to construct Lagrangian concordances
 and cobordisms through satellites. 
  Then in Section~\ref{sec:non-decomp-candidates}, we describe some potential pathways -- through the theory of rulings, Heegaard-Floer homology, and contact surgery --
 to potentially show the existence of Legendrians that are Lagrangian cobordant but are not related by a  decomposable Lagrangian cobordism.
 
 \smallskip
 
\noindent
{\bf Acknowledgements:} This project was initiated at the workshop Women in Symplectic and Contact Geometry and Topology (WiSCoN) that took place at ICERM in July 2019.  The authors
thank the NSF-HRD 1500481 - AWM ADVANCE grant for funding this workshop.
Leverson was supported by NSF postdoctoral fellowship DMS-1703356. We thank Emmy Murphy for suggesting and encouraging us to work on this project.  
 In addition, we thank John Etnyre, Roberta Guadagni,  Tye Lidman, Lenny Ng, Josh Sabloff, and B\"ulent Tosun for useful conversations related to this project.

\section{Background} \label{sec:background}

\subsection{Legendrian Knots and Links} \label{ssec:legendrians}
In this section, we give a very brief introduction to Legendrian submanifolds in $\R^3$ and their invariants. 
More details can be found, for example, in the survey paper \cite{Etnyre}.

In $\R^3$, the {\bf standard contact structure} $\xi$ is a $2$-dimensional plane field given by the kernel of the $1$-form $\alpha=dz-ydx$.
In $(\R^3, \xi=\ker \alpha)$, a {\bf Legendrian knot} is a knot in $\R^3$ that is tangent to $\xi$ everywhere.
A useful way to visualize a Legendrian knot is to project it  from $\R^3$ to $\R^2$. 
There are two useful projections: the {\bf  Lagrangian projection}
$$
\begin{array}{rccc}
\pi_L: & \mathbb{R}^3 & \rightarrow & \mathbb{R}^2 \\
& (x,y,z) & \mapsto &(x,y),
\end{array}
$$
as well as the {\bf front projection}
$$
\begin{array}{rccc}
\pi_F: & \mathbb{R}^3 & \rightarrow & \mathbb{R}^2 \\
& (x,y,z) & \mapsto &(x,z).
\end{array}
$$
An example of  a Legendrian trefoil is shown in Figure~\ref{fig:trefoil}.

\begin{figure}[!ht]
\labellist
\pinlabel $x$ at 40 10
\pinlabel $z$ at 10 40
\pinlabel $x$ at 300 10 
\pinlabel $y$ at 265 40
\endlabellist
\includegraphics[width=4in]{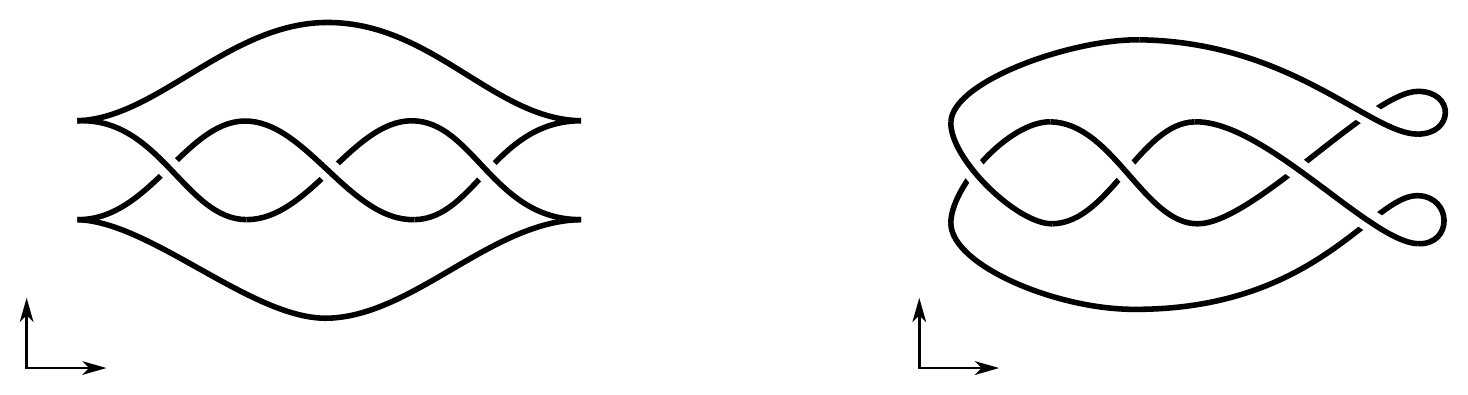}
\caption{The front projection (left) and the Lagrangian projection (right) of a Legendrian trefoil.}
\label{fig:trefoil}
\end{figure}

Legendrian submanifolds are equivalent if they can be connected by a $1$-parameter family
of Legendrian submanifolds.  In fact, for each topological knot type there are infinitely many different 
 Legendrian knots. 
Indeed, we can stabilize a Legendrian knot (as shown in Figure~\ref{fig:stab}) to  get another Legendrian knot of  the same topological knot type. 
We can see that these are not Legendrian equivalent using Legendrian invariants.

\begin{figure}[!ht]
\includegraphics[width=3in]{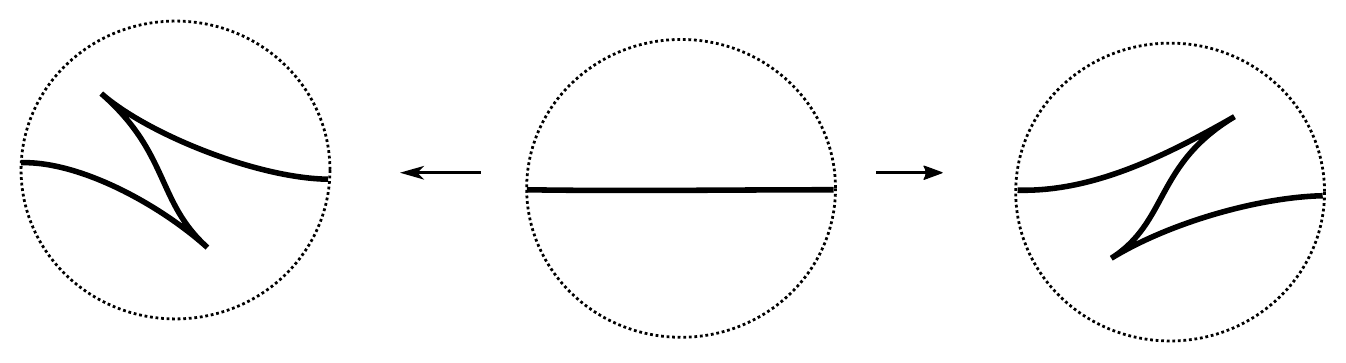}
\caption{Two ways to stabilize a Legendrian knot in front projection.}
\label{fig:stab}
\end{figure}

Two useful classical invariants of Legendrian knots $\Lambda$ are the {Thurston-Bennequin number} $tb(\Lambda)$ and the {rotation number} $r(\Lambda)$.
They can be computed easily from front projections.
Given the front projection of a Legendrian knot or link $\Lambda$,
the {\bf Thurston-Bennequin number} is $$tb(\Lambda)= \mbox{writhe}(\pi_F(\Lambda))-\#(\mbox{right cusps}),$$ where the writhe is the number of crossings counted with sign.
Once the Legendrian knot is equipped with an orientation, the {\bf rotation number} is 
$$r(\Lambda)=\frac{1}{2}\Big(\#(\mbox{down cusps})-\#(\mbox{up cusps})\Big).$$
One can use these two invariants to see that  stabilizations change the Legendrian knot type.

In future sections, we will not assume that our Legendrians $\leg_{\pm}$ come equipped with an orientation. In our Motivating Questions described in Section 1, our Lagrangian cobordisms are always orientable, so the existence of a Lagrangian cobordism from $\leg_{-}$ to $\leg_{+}$ will induce orientations on $\leg_{\pm}$.

 There are many powerful non-classical invariants that can be assigned to a Legendrian knot.  Although this will not be a focus of this paper, we will give a brief
 description of some of these invariants.  One important invariant stems from {\it normal  rulings}, defined independently by Chekanov and Pushkar \cite{CP} and Fuchs \cite{Fuchs}.  
 A count of normal  rulings leads to {\it ruling polynomials} \cite{CP}; more details will be discussed in Section~\ref{sec:rul}.
 Through the closely related theory of { generating families}, one can also associate invariant polynomials that record the dimensions of {\it generating family homology groups}
 \cite{lisa:links, lisa-jill, f-r, ST:obstruct}. In addition, through the theory of pseudo-holomoprhic curves, one can associate to a Legendrian $\Lambda$ a {\it differential graded algebra (DGA)}, $\mathcal A(\Lambda)$ \cite{Che, Eli}.
 An {\it augmentation} is a DGA map from $\mathcal A(\Lambda)$ to a field. 
The count of augmentations is closely related to the count of ruling polynomials \cite{Fuchs, NR, NS}.
 Augmentations can be used to construct finite-dimensional {\it linearized contact homology groups} \cite{Che}, which are
often known to be isomorphic to the generating family homology groups \cite{f-r}.  In addition, there are invariants for Legendrian knots coming from {\it Heegaard Floer Homology}
\cite{LOSS} \cite{OST}.

\subsection{Lagrangian Cobordisms}
Lagrangian cobordisms between Legendrian submanifolds always have ``cylindrical ends'' over the Legendrians, but other conditions vary: sometimes it is specified that the Lagrangian
is exact,  is embedded (or immersed), is orientable, or has a fixed Maslov class.    In this paper, a Lagrangian cobordism is always exact, embedded, and orientable. 
 
\begin{definition} \label{defn:cobordism} Let $\Lambda_{\pm}$ be two Legendrian knots or links in $(\R^3, \xi=\ker \alpha)$. A {\bf Lagrangian cobordism $L$ from  $\Lambda_-$ to $\Lambda_+$}
is an embedded, orientable Lagrangian surface in the symplectization $(\R\times\R^3, d(e^t\alpha))$ 
	   such that for some $N > 0$,
\begin{enumerate}
		\item  $L \cap ([-N,N]\times\R^3)$ is compact,
		\item  $L \cap ((N,\infty)\times\R^3)=(N,\infty)\times \Lambda_{+}$, 
		\item   $L \cap ((-\infty,-N)\times\R^3)=(-\infty,-N)\times \Lambda_{-}$, and 
		\item there exists  a function $f: L \to \R$  and constant numbers $\frak c_\pm$ such that 
		$e^t\alpha|_{TL} = df$,
		 where $f|_{(-\infty, -N) \times \Lambda_{-}} = \frak c_{-}$, and $f|_{(N, \infty) \times \Lambda_{+}} = \frak c_{+}$.  
\end{enumerate}
A {\bf Lagrangian filling of $\Lambda_+$} is a Lagrangian cobordism with $\leg_- = \emptyset$; a  {\bf Lagrangian cap of $\Lambda_-$} is a Lagrangian cobordism with $\leg_+ = \emptyset$.
 A {\bf Lagrangian concordance} occurs when $\Lambda_\pm$ are knots and $L$ has genus $0$.
 \end{definition}
 
 Figure~\ref{fig:cob} is a schematic representation of a Lagrangian cobordism.
\begin{figure}[!ht]
\labellist
\pinlabel $t$ at -10 370
\pinlabel $N$ at -10 260 
\pinlabel $\Lambda_+$ at 350 300 
\pinlabel $\Lambda_-$ at 350 110
\pinlabel $L$ at 350 200
\pinlabel $-N$ at -20 70
\endlabellist
\includegraphics[width=2in]{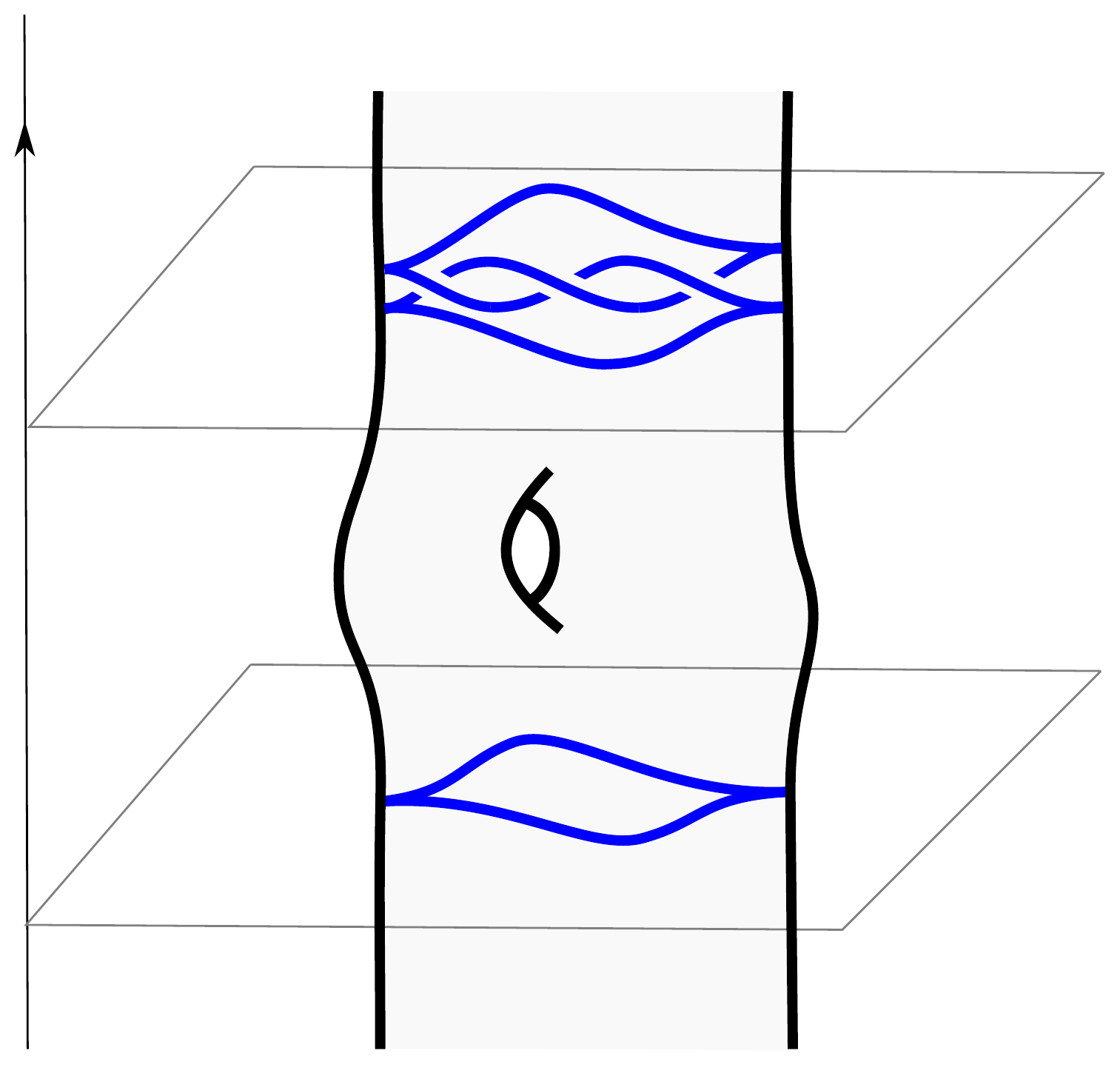}
\caption{A Lagrangian cobordism from $\Lambda_-$ to $\Lambda_+$.  
}
\label{fig:cob}
\end{figure}

 \begin{remark} 
 In condition (4) of Definition~\ref{defn:cobordism}, the fact that $\leg_\pm$ are Legendrian will guarantee that $f_{\pm}$ will be locally constant.  Using this, it follows that
  any genus zero Lagrangian surface that is cylindrical over 
 Legendrian knots will be exact.  When $\leg_\pm$ have multiple components, one needs to 
 check that the constant does not vary: this condition guarantees the exactness of the Lagrangian cobordism obtained by  ``gluing'' together Lagrangian cobordisms \cite{chantraine:disconnected-ends}.
 \end{remark}
 
 \begin{remark}
 In contrast to topological cobordisms,  Lagrangian cobordisms form a non-symmetric relationship on Legendrian knots \cite{Cha2}.
In this article we will always denote   the direction of increasing
 $\mathbb R_t$ coordinate  by an arrow.   
 \end{remark}

\subsection{Obstructions to Lagrangian Cobordisms} \label{ssec:obstructions}
The focus of this paper is on constructing Lagrangian cobordisms between two given Legendrians $\leg_{\pm}$. In the smooth world, any two knots are related by a smooth cobordism,
 but in this more restrictive Lagrangian world, 
there are a number of obstructions that are important to keep in mind when trying to explicitly construct Lagrangian cobordisms.   
  Here we mention a few that come from
classical and non-classical invariants of the Legendrians $\leg_{\pm}$.
  
{\bf Obstructions:}  
\begin{enumerate}
\item If there exists a Lagrangian cobordism  of genus $g$ between $\leg_{-}$ and $\leg_{+}$, then there must exist a smooth cobordism of genus $g$ between the smooth knot types of $\leg_-$ and $\leg_+$.  Thus any obstruction of a smooth genus $g$ cobordism between $\leg_{-}$ and $\leg_{+}$ would obstruct a Lagrangian genus $g$ cobordism.
\item Since  there are no closed, exact Lagrangian surfaces \cite{Gro}, if there exists a Lagrangian cap (respectively, filling) for $\leg$, then there cannot exist a Lagrangian filling (respectively, cap) of $\leg$.
\item  As shown in  \cite{Cha}, if 
there exists a Lagrangian cobordism $L$ from $\leg_{-}$ to $\leg_{+}$, then
$$
r(\leg_{-}) = r(\leg_{+}) \quad \text{ and } \quad
tb(\leg_{+})-  tb(\leg_{-})= -\chi(L).
$$
In particular, if a Legendrian knot $\leg$ admits a Lagrangian filling or cap, then $r(\leg) = 0$.
 Also, combining this equality on $tb$  and the slice-Bennequin inequality \cite{rudolph:slice-tb},  we see that,
  when $\Lambda$ is a single component knot, 
 if there exists a Lagrangian cap $L$ of  $\leg$, then $tb(\leg) \leq -1$ and $g(L) \geq 1$.  
 \item If there exists a Maslov $0$ (\cite{EESu}) Lagrangian cobordism $\Sigma$ from $\leg_{-}$ to $\leg_{+}$, and $\leg_{-}$ has an augmentation, then
\begin{enumerate}
\item $\#Aug(\Lambda_+;\F_{2})\ge\#Aug(\Lambda_-;\F_{2})$, where  $\F_{2}$ is the finite field of two elements, and $\#Aug(\Lambda;\F_{2})$ denotes the number of augmentations of $\Lambda$ to $\F_{2}$ up to DGA homotopy \cite{Pan1, WIG}, and
\item the ruling polynomials $R_{\Lambda_{\pm}}(z)$ (see Section~\ref{sec:rul} for definitions)
satisfy 
$$R_{\Lambda_-}(q^{1/2}-q^{-1/2})\leq q^{-\chi(\Sigma)/2}R_{\Lambda_+}(q^{1/2}-q^{-1/2}),$$
 for any $q$ that is a power of a prime number \cite{Pan1}.
\end{enumerate}
\item If $\Lambda$ admits a Maslov $0$ Lagrangian filling $L$, and if $\e_L$ denotes the augmentation of $\Lambda$ induced by $L$, then $LCH^{k}_{\e_L}(\Lambda)\cong H_{n-k}(L)$, which is known as the Ekholm-Seidel isomorphism \cite{Ekholm}, and whose proof was completed by Dimitroglou Rizell in \cite{DR}. More generally, if there is a cobordism from $\Lambda_-$ to $\Lambda_+$, and if $\Lambda_-$ admits an augmentation, then \cite{CDRGG-cobord} provides several long exact sequences relating the homology of the cobordism and the Legendrian contact (co)homologies of its Legendrian ends. A version of this isomorphism and these long exact sequences using generating families are given in \cite{ST:obstruct}.
\item  
If $\Lambda$ admits an augmentation, $\Lambda$ does not admit a Lagrangian cap, as the augmentation implies the non-acyclicity of the DGA $\mathcal A(\Lambda)$ \cite[Theorem 5.5]{EES}, and from \cite[Corollary 1.9]{DR1} if a Legendrian admits a Lagrangian cap then its DGA $\mathcal A(\Lambda)$ (with $\Z_2$ coefficients) is acyclic.

\end{enumerate}
 There are additional obstructions, obtained through Heegaard Floer Theory, that can be used to obstruct Lagrangian concordances and cobordisms \cite{BS, GJ, BLW}.
Some of these will be discussed more in Section~\ref{ssec:Floer}.
 
\begin{remark} \label{rem:no-stab-obstruct}
Observe that the obstructions in (4) and (6) assume that  the bottom $\Lambda_-$ has an augmentation, and stabilized knots
will never have an augmentation.  
 It would be nice to have more obstructions when
$\leg_{-}$ is a stabilized knot. This might be possible using the theory of ``satellites'' described in Section~\ref{sec:sat}:  it is possible for the satellite of a stabilized Legendrian to admit an augmentation.  See Section~\ref{ssec:sat-obstruct} for more discussions in this direction.
 \end{remark}
 
 \section{Combinatorial Constructions of  Lagrangian Cobordisms} \label{sec:construct}

A convenient way of visualizing topological cobordisms is through ``movies'': a sequence of pictures  that represent 
slices of the Lagrangian.  
In this section, we describe three known combinatorial ways to construct Lagrangian cobordisms through such an approach.  

\subsection{Decomposable Moves} \label{ssec:decomp-moves}

It is well known that if $\leg_-$ and $\leg_+$ are Legendrian isotopic, then there exists a Lagrangian cobordism from $\leg_-$ to $\leg_+$; see, for example, \cite{EliGro, Cha, EkHoKa}.
Isotopy, together with two types of handle moves, form the basis for decomposable Lagrangian cobordisms.

\begin{theorem}[\cite{EkHoKa, bst:construct}] If the front diagrams of two Legendrian links $\leg_-$ and $\leg_+$ are related by any of the following moves, there is a Lagrangian cobordism $L$ from $\leg_-$ to $\leg_+$.
\begin{description}
\item[Isotopy] There is a Legendrian isotopy between  $\leg_-$ and $\leg_+$; see Figures~\ref{fig:decomp}(a)-\ref{fig:decomp}(c) for Reidemeister Move I-III.
\item[$1$-handle] The front diagram of $\leg_{-}$ can be obtained from the front diagram of $\leg_{+}$ by ``pinching'' two oppositely-oriented strands; see Figure~\ref{fig:decomp}(d). We will also refer to this move as a ``Pinch Move.''
\item[$0$-handle] The front  diagram of $\leg_{-}$ can be obtained from the front diagram of $\leg_{+}$ by deleting a component of $\leg_{+}$ that is the front diagram of a standard Legendrian unknot $U$ with maximal Thurston-Bennequin number of $-1$ as long as there exist disjoint disks $D_{U}, D_{U^{c}} \subset \mathbb R_{xz}^{2}$ containing the $xz$-projection of $U$ and the other components of $\Lambda_{+}$, respectively.
Such an ``unknot filling'' can be seen in Figure~\ref{fig:decomp}(e).

\end{description}
\end{theorem}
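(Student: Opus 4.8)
The plan is to verify each of the three moves separately by exhibiting an explicit local model and then splicing it into the trivial cylinder over the part of the front that is left unchanged, checking in each case the four conditions of Definition~\ref{defn:cobordism}. Since all three moves are supported in a compact region of the front, away from that region the surface is simply the cylinder $\R \times (\text{unchanged strands})$, which is tautologically Lagrangian (as $\alpha$, hence $e^t\alpha$, vanishes on a Legendrian) and exact; thus the real content is entirely local.

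For the \textbf{Isotopy} move, I would first invoke the Legendrian isotopy extension theorem to upgrade the given isotopy $\{\leg_s\}_{s\in[0,1]}$ to an ambient contact isotopy $\phi_s$ of $(\R^3,\xi)$, generated by a time-dependent contact Hamiltonian $h_s$, reparametrized so that $\phi_s$ is constant in $s$ near $s=0,1$. This contact isotopy lifts canonically to an $\R$-equivariant Hamiltonian isotopy $\Phi_s$ of the symplectization $(\R\times\R^3, d(e^t\alpha))$, with Hamiltonian $e^t h_s$. Choosing a monotone cutoff $\sigma:\R\to[0,1]$ equal to $0$ for $t\le -N$ and $1$ for $t\ge N$, I would take the cobordism to be the trace $L=\{(t,\phi_{\sigma(t)}(p)):t\in\R,\ p\in\leg_-\}$, i.e.\ the Lagrangian suspension of $\Phi_s$ applied to the cylinder $\R\times\leg_-$. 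The cylindrical ends then follow because the cutoff is constant outside $[-N,N]$, so $L$ agrees with $\R\times\leg_-$ below and $\R\times\leg_+$ above. The Lagrangian and exactness conditions are the delicate point: they follow from the standard fact that a suspension of a Hamiltonian isotopy is an exact Lagrangian, together with a primitive bookkeeping to produce the function $f$ and the end-constants $\frak c_\pm$ of Definition~\ref{defn:cobordism}(4). This is precisely the content already cited in \cite{EliGro, Cha, EkHoKa}.

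For the \textbf{$1$-handle} (pinch) and \textbf{$0$-handle} moves, the cleanest route is explicit local models. For the pinch, near the two oppositely-oriented strands I would record the saddle as a compact exact Lagrangian surface interpolating between the pinched strands (as $t\to-\infty$) and the uncrossed pair (as $t\to+\infty$); the standard model realizes a Morse saddle in the front, and one checks directly that $e^t\alpha$ restricts to an exact form. For the $0$-handle, I would use that the standard Legendrian unknot $U$ with $tb(U)=-1$ bounds a standard Lagrangian disk (the ``flying-saucer'' disk); placing this disk inside the ball $D_U$, capped below by a local minimum and cylindrical above over $U$, supplies the handle, and disjointness of $D_U$ from $D_{U^c}$ lets it be glued to the cylinder over the remaining components. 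A uniform way to produce all of these models at once is the generating-family construction of \cite{bst:construct}: one builds a generating family over $\R\times\R^3$ realizing each move, and the associated Lagrangian is automatically embedded and exact.

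The step I expect to be the main obstacle is \emph{exactness of the glued surface when $\leg_\pm$ are disconnected}, rather than the local Lagrangian condition. As noted in the Remark following Definition~\ref{defn:cobordism}, a genus-zero cylinder over a Legendrian is automatically exact, but once the $1$-handle merges or splits components, or the $0$-handle introduces a new unknotted component, one must check that the locally-constant values of the primitive $f$ on the different cylindrical ends can be arranged to agree, so that $f$ extends to a single global primitive taking constant values $\frak c_\pm$ on the two ends. Verifying this matching of constants across components — the issue isolated in \cite{chantraine:disconnected-ends} — is the real work; once it is in hand, splicing the local models into the cylinder over the unchanged strands completes the construction for each of the three moves.
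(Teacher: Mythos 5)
The paper itself contains no proof of this theorem: it is stated as a citation to \cite{EkHoKa, bst:construct}, so your outline can only be compared against the constructions in those references. Your overall architecture --- local models for the $0$- and $1$-handles spliced into the trivial cylinder over the unchanged strands, generating families as a uniform source of embedded exact models, and the matching of the primitive's locally constant values across components as the place where the disjoint-disk hypothesis and the issue from \cite{chantraine:disconnected-ends} enter --- is faithful to how those references actually proceed, and correctly identifies where the content lies.

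There is, however, one concrete misstep in the \textbf{Isotopy} case: the trace $L=\{(t,\phi_{\sigma(t)}(p)): t\in\R,\ p\in\leg_-\}$ is in general \emph{not} Lagrangian. Pulling back $e^t\alpha$ under $(t,p)\mapsto(t,\phi_{\sigma(t)}(p))$ gives the $1$-form $e^t\,h_{\sigma(t)}(\phi_{\sigma(t)}(p))\,\sigma'(t)\,dt$ on $\R\times\leg_-$ (the terms in the $\leg_-$-directions vanish because $\phi_s$ is a contactomorphism and $\leg_-$ is Legendrian), and this form is closed only if the contact Hamiltonian $h_s$ is locally constant along $\leg_s$ for each $s$ --- which fails for a generic isotopy. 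The ``Lagrangian suspension of a Hamiltonian isotopy'' you invoke is a statement about a Lagrangian in $M\times T^*[0,1]$, not in the symplectization, so it does not directly repair the formula. The actual constructions in \cite{Cha, EkHoKa} either add a correction producing an exact Lagrangian \emph{immersion} whose embeddedness must then be arranged by slowing the isotopy down, or decompose the isotopy into $C^1$-small pieces, each graphical in a Weinstein neighborhood $J^1\leg_s$ and realized by an interpolating generating family, and then stack. Your sketch needs one of these two repairs; the $1$-handle, $0$-handle, and exactness-matching portions are otherwise in line with the cited proofs.
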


\begin{figure}[!ht]
\labellist
\pinlabel $(a)$ at 40 -15
\pinlabel $(b)$ at 150 -15
\pinlabel $(c)$ at 260 -15
\pinlabel $(d)$ at 370 -15
\pinlabel $(e)$ at 480 -15
\pinlabel $\emptyset$ at 477 38
\endlabellist
\includegraphics[width=5in]{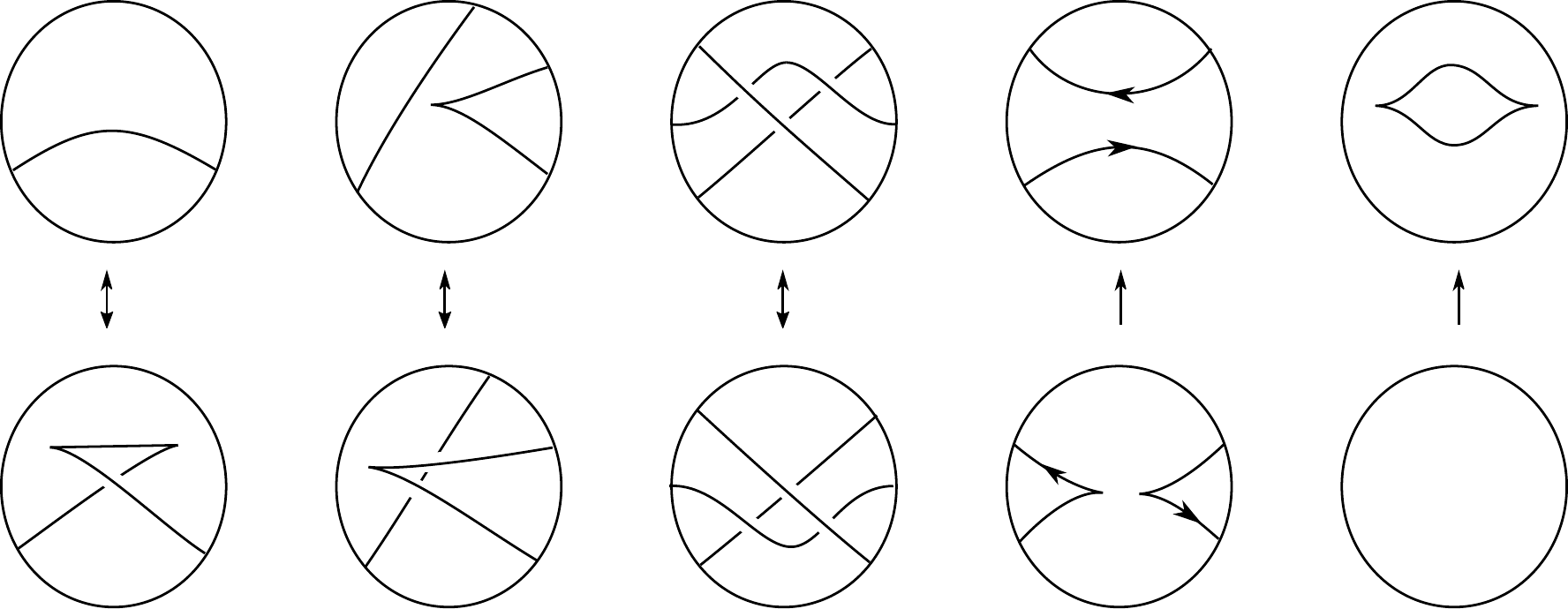}
\vspace{0.2in}

\caption{Decomposable moves in terms of front projections. 
Arrows indicate the direction of increasing $\mathbb R_t$ coordinate in the symplectization.  
The move in $(b)$ only shows the Reidemeister II
 move in the left cusp case, but there is an analogous move for the right cusp.}
\label{fig:decomp}
\end{figure}

\begin{definition}
A Lagrangian cobordism $L$ from $\leg_-$ to $\leg_+$ is called {\bf elementary} if it arises from isotopy, a single $0$-handle, or a single $1$-handle.  
A Lagrangian cobordism $L$ from $\leg_{-}$ to $\leg_{+}$ is {\bf decomposable} if it is obtained by stacking elementary Lagrangian cobordisms.
\end{definition} 

Observe that there is {\it not} an elementary
move corresponding to a $2$-handle (maximum).  Also note that the elementary $1$-handle (saddle) move can be used to connect two components or to split one component into two.

Decomposable cobordisms are particularly convenient as they are easy to describe in a combinatorial fashion, through a list of embedded Legendrian curves,
$$\Lambda_-=\Lambda_0\rightarrow \Lambda_1\rightarrow \cdots \rightarrow \Lambda_n=\Lambda_+,$$
where the front projection of the Legendrian $\Lambda_{i+1}$ is related to that of  $\Lambda_i$ by  isotopy or one of  the $0$-handle or $1$-handle moves.

\begin{example}\label{ex:tref} One can construct a Lagrangian filling of a positive Legendrian trefoil  with maximal Thurston-Bennequin number using the series of moves shown in  Figure~\ref{fig:tref}: a $0$-handle, followed by three Reidemeister I moves, followed by two $1$-handles (or pinch moves).
This gives a genus $1$ (orientable, exact) Lagrangian filling of this Legendrian trefoil.  Since we are assuming that Lagrangian fillings and caps are always exact, this implies that this trefoil cannot admit a Lagrangian cap; see Section~\ref{ssec:obstructions} Obstructions (2).
\begin{figure}[!ht]
\includegraphics[width=4in]{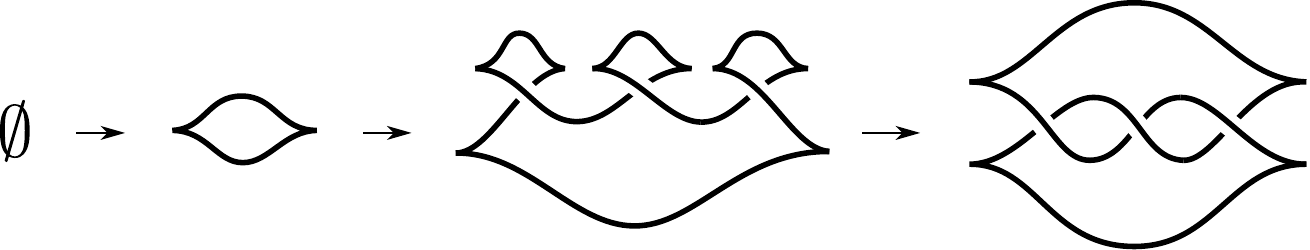}
\caption{A decomposable Lagrangian filling of a Legendrian trefoil. 
}
\label{fig:tref}
\end{figure}

\end{example}

\begin{example}  Using elementary moves, one can also construct a Lagrangian concordance from the unknot with $tb=-1$ to a Legendrian representative of the knot $m(9_{46})$, as shown on Figure~\ref{fig:m946}.  \begin{figure}[!ht]
\includegraphics[width=5in]{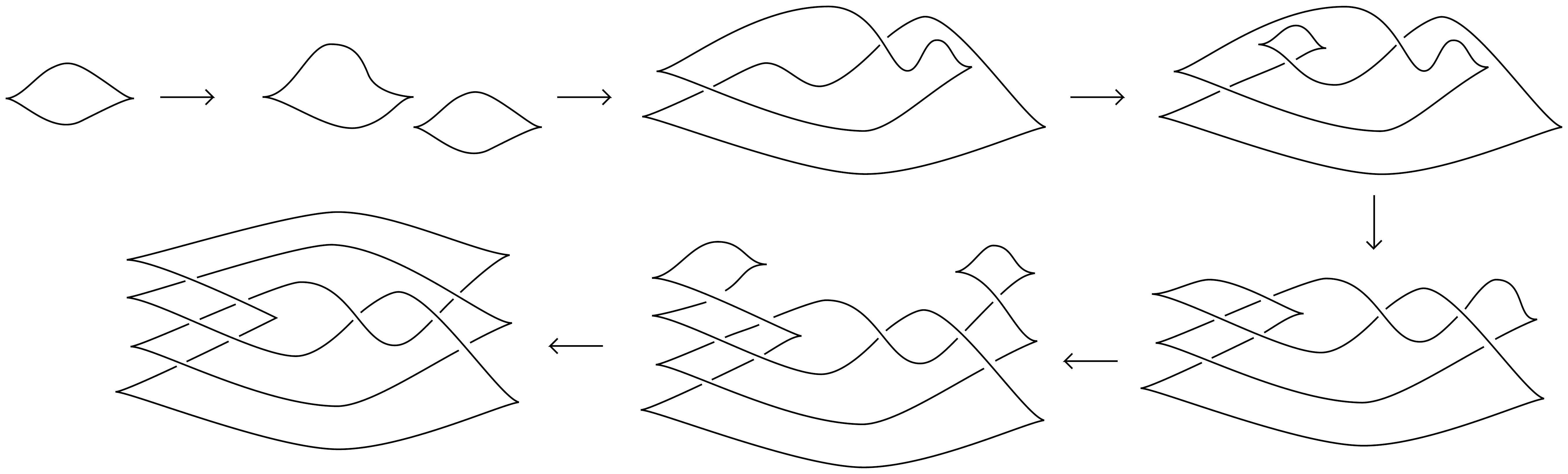}
\caption{A decomposable Lagrangian cobordism from a Legendrian unknot to a Legendrian $m(9_{46})$.  }
\label{fig:m946}
\end{figure}
\end{example}

\subsection{Guadagni Moves}\label{sec:Gmoves}

Very recently, Roberta Guadagni has discovered a new ``tangle'' move; see Figure~\ref{fig:R-move}. This is not a local move: there are some global requirements.  In particular, this move cannot
be applied if  all components of the tangle are contained in the same component of $\leg_{-}$: the component of
$\leg_{-}$  containing the blue strand must be different than the components containing the other strands of the tangle.

\begin{figure}[!ht]
\centerline{\includegraphics[height=1.2in]{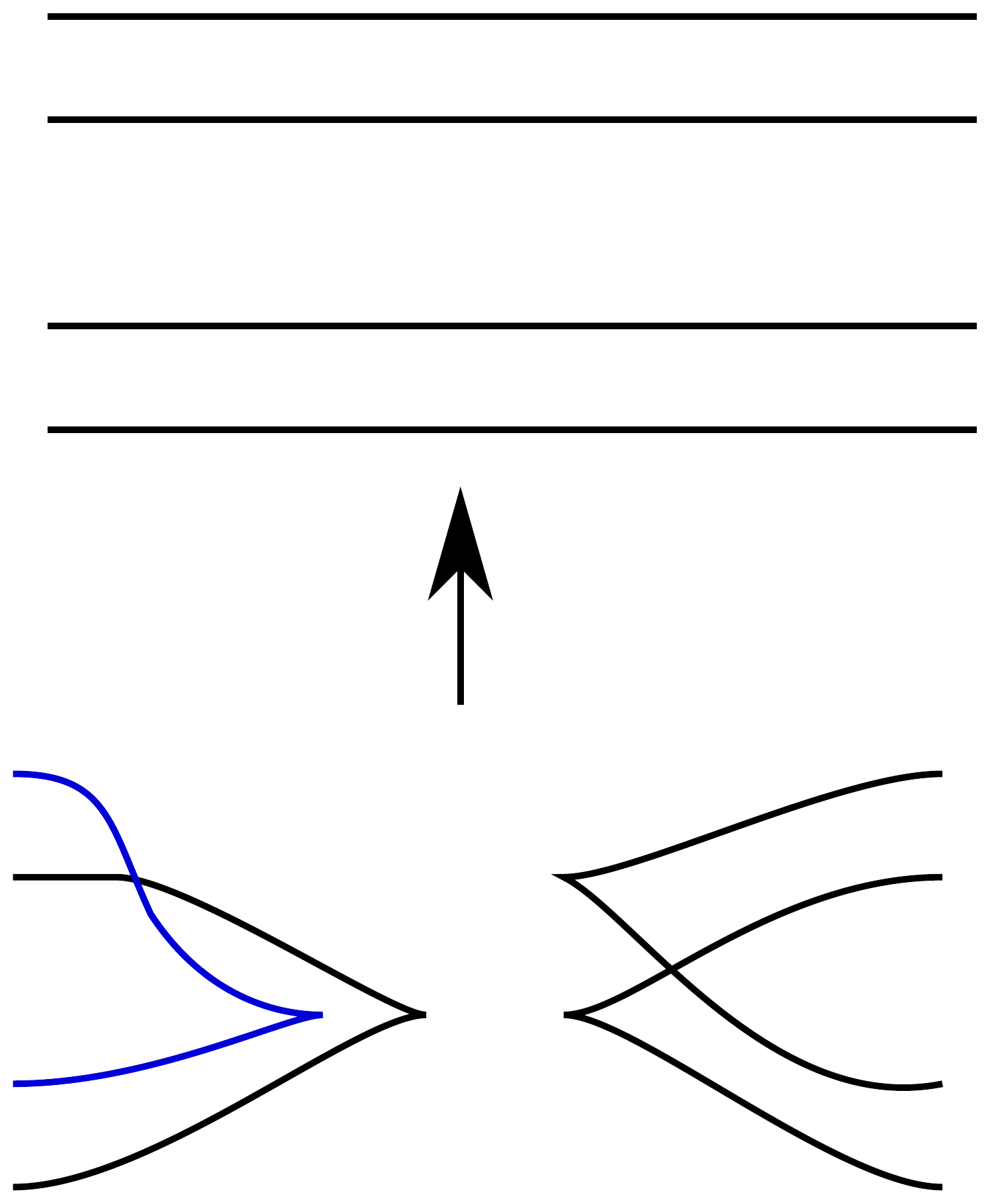}}
\caption{Under some global conditions, there exists a Lagrangian cobordism between these tangles.}
\label{fig:R-move}
\end{figure}

\begin{example} With Guadagni's tangle move, it is possible to construct a Lagrangian cobordism between the Legendrians pictured in Figure~\ref{fig:satellite}; see Figure~\ref{fig:guad-movie}.
However, at this point it is not known if Guadagni's tangle move is independent of the decomposable moves.    
\end{example}

\begin{figure}[!ht]
\centerline{\includegraphics[width=3.5in]{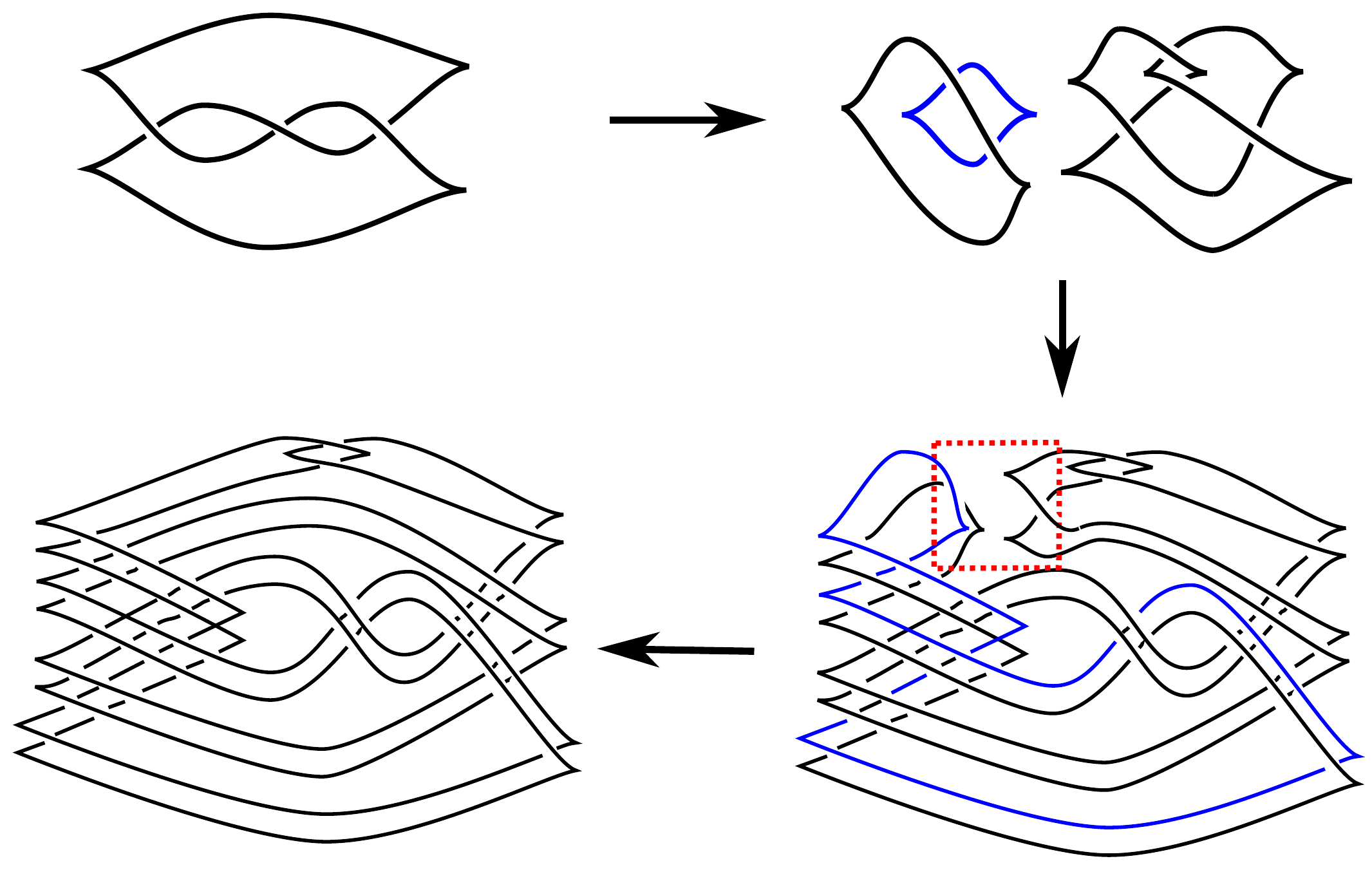}}
\caption{A movie, using a Guadagni move, of an (orientable, exact) Lagrangian cobordism from the trefoil  to the Whitehead double of  $m(9_{46})$ in Figure~\ref{fig:satellite}.}
\label{fig:guad-movie}
\end{figure}

\subsection{Lagrangian Diagram Moves} \label{ssec:lin}

As shown in Section~\ref{ssec:decomp-moves}, decomposable cobordisms are constructed from $0$-handles and  some $1$-handles (saddles) but no $2$-handles (caps).
Based on the work of Sauvaget \cite{Sau}, Lin \cite{Lin} constructs a genus two cap of a twice stabilized unknot, and thus gives the first explicit example of a non-decomposable Lagrangian cobordism. 
The construction describes time-slices of a Lagrangian cobordism through a list of moves on ``decorated Lagrangian diagrams.''

A {\bf decorated Lagrangian diagram} is a  curve  in the $xy$-plane with the compact regions  decorated by a positive number, which is the {\bf area} of the region.
Figure~\ref{fig:SLM} shows some examples: in the illustration of the $F$ move, $U$ is a Lagrangian projection of the Legendrian  unknot with maximal Thurston-Bennequin number;   in the illustration of the $C$ move, $U_{m}$  {\it is} a decorated Lagrangian diagram, but is  {\it not} the Lagrangian projection of a Legendrian knot.

\begin{figure}[!ht]
\labellist
\pinlabel $\emptyset$ at 323 122
\pinlabel $\emptyset$ at 227 35
\tiny
\pinlabel $+A$ at 80 282
\pinlabel $+A$ at 92 302
\pinlabel $-A$ at 69 297
\pinlabel $-A$ at 100 287
\pinlabel $0$ at 179 292
\pinlabel $\delta$ at 160 292
\pinlabel $\eta$ at 200 292
\pinlabel $0$ at 275 286
\pinlabel $\e$ at  275 305
\pinlabel $\delta$ at 256 275
\pinlabel $\eta$ at 292 275
\pinlabel $0$ at 275 209
\pinlabel $H_+$ at 45 78
\pinlabel $H_-$ at 141 78
\pinlabel $F$ at 234 78
\pinlabel $C$ at 330 78
\pinlabel $R_0$ at 97 245
\pinlabel $R_2$ at 189 245
\pinlabel $R_3$ at  285 245
\pinlabel $U$ at  228 135
\pinlabel $a$ at  215 122
\pinlabel $a$ at 240 122
\pinlabel $a$ at  313 33 
\pinlabel $a$ at 334 33
\pinlabel $U_m$ at 328 20
\endlabellist
    \includegraphics[width=4in]{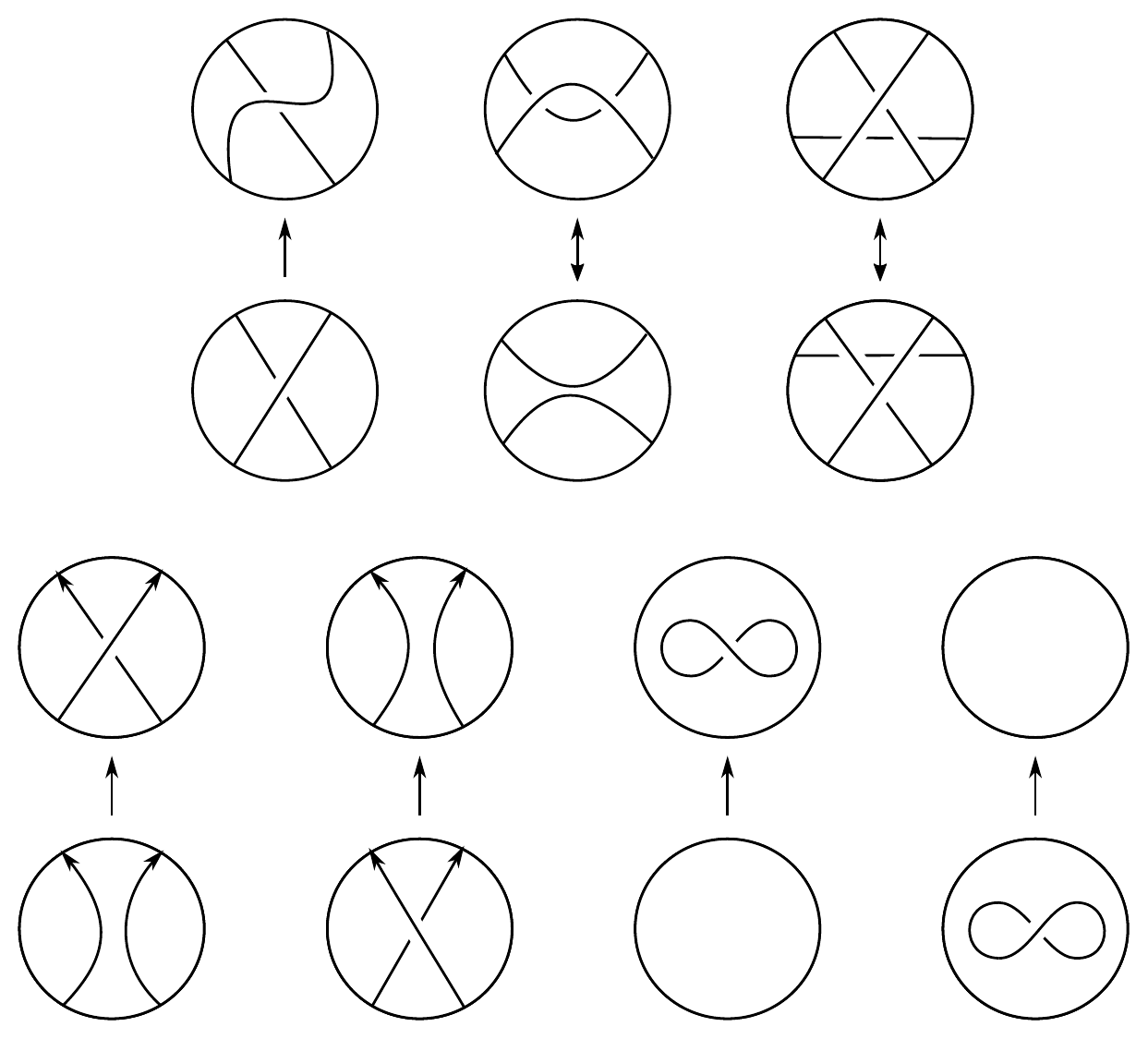}
    \caption{The Lagrangian diagram moves.  The labels in $R_0$ move represent the change of area through the move, while other labels  $0,\e, \delta, \eta, a$ indicate the area of the corresponding regions; here $0$ represents a positive area  that is smaller than either the area $\e$, the area  $\delta$ or  the area  $\eta$. 
   }
    \label{fig:SLM}
\end{figure}

\begin{theorem}[\cite{Lin}]
Let $\Lambda_{\pm}$ be Legendrian links and $D_{\pm}$ be their corresponding decorated Lagrangian projections. If one can create a sequence of decorated Lagrangian diagrams
$$D_-=D_0 \rightarrow D_1\rightarrow \cdots \rightarrow D_n=D_+$$ 
such that each diagram $D_{i+1}$ can be obtained from $D_i$ by the following combinatorial moves, then there is a compact Lagrangian submanifold in $\R \times \R^{3}$ with boundary $\Lambda_- \cup \Lambda_+$, where $\leg_{\pm} \subset \{\pm N\} \times \R^{3}$, for some $N > 0$.
\begin{enumerate}
\item $R_0$: a planar isotopy that changes areas by the amount $\pm A$, for $A > 0$.  This operation can only be done in the direction specified. 
\item $R_2$: a Reidemeister II move. One can either introduce or eliminate two crossings assuming some area conditions are satisfied: it is possible to introduce or 
remove two crossings
as long as the area of the inner region, denoted by $0$ in the diagram, is less than either the area $\delta$ or the area $\eta$.
One can also do this
move with the lower strand passing under the upper strand.
\item $R_3$: a Reidemeister III move.  One  can perform a Reidemeister III move as long as the area of the inner region, denoted by $0$ in the diagram, is less than either the area $\e$,  the area  $\delta$ or the area $\eta$.
The fixed center crossing can be reversed.   Additionally, the moving strand can also occur as an overstrand.
\item $H_+$: a handle attachment that creates a positive crossing in the diagram.
\item $H_-$: a handle attachment that removes a negative crossing in the diagram.
\item $F$: a filling that creates the diagram $U$, which is the Lagrangian projection of an unknot with maximal Thurston-Bennequin number.
\item $C$: a cap that eliminates the diagram $U_m$, which is the topological  mirror of $U$.
\end{enumerate}
These moves are called  {\bf Lagrangian diagram moves}.  Moreover, the constructed Lagrangian will be exact if, in addition,
\begin{enumerate}
\item [(E1)] Each move results in a diagram with all components having a total signed area equal to $0$.  The signed area of a region is determined by the sum of the signed heights of its Reeb chords.
\item[(E2)] If a handle attachment merges two components of a link,  the components being merged must be vertically split, meaning that the images of the $xy$-projections of these components are contained in
disjoint disks.  
 \end{enumerate}
\end{theorem}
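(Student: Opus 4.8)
The plan is to build the Lagrangian surface by concatenating explicit local models, one for each move in the sequence, gluing them along their cylindrical boundary slices, and then verifying exactness separately as a statement about periods of the primitive. The whole argument rests on the ``movie'' picture: the slices $L\cap(\{t\}\times\R^3)$, projected to $\R^2_{xy}$, trace out a family of immersed curves with area data, and the moves $R_0,R_2,R_3,H_\pm,F,C$ are precisely the elementary transitions between consecutive decorated Lagrangian diagrams. So it suffices to realize each transition locally and to check that gluing and exactness behave.

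First I would fix the dictionary between the combinatorial and the geometric data. A decorated Lagrangian diagram is read as an immersed curve $\gamma\subset\R^2_{xy}$ whose lift to $\R^3$ is governed by $dz=y\,dx$; by Stokes' theorem the $z$-drop across any arc, and in particular the height of each Reeb chord at a double point, is computed from the decorating areas via $\int_R dx\wedge dy=\oint_{\partial R} y\,dx$. Thus a diagram all of whose compact regions carry positive areas and whose total signed area per component vanishes is exactly the Lagrangian projection of an embedded Legendrian, and this is the form the slices take over the cylindrical ends $t=\pm N$.

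Next I would produce, for each move, an embedded Lagrangian in a slab $[a,b]\times\R^3$ that is cylindrical over the ``before'' diagram at $t=a$ and over the ``after'' diagram at $t=b$. The Reidemeister-type moves $R_0,R_2,R_3$ are realized by the trace of an ambient isotopy; here the stated area inequalities---e.g.\ that the inner region $0$ be smaller than the area $\delta$ or the area $\eta$---are exactly the conditions forcing every intermediate slice to remain embedded, so that the trace is an embedded surface rather than an immersed one. The handle moves $H_\pm$ are the standard Lagrangian $1$-handle (saddle) local models of \cite{EkHoKa, bst:construct}, which in the Lagrangian projection create or cancel a single crossing, and $F$ is the standard $0$-handle disk capping a small standard unknot whose projection is $U$. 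I would then glue consecutive slabs: since adjacent models agree (are cylindrical) on the shared slice, the union is a smooth embedded Lagrangian with boundary $\Lambda_-\cup\Lambda_+$ over $t=\pm N$. The move $C$ is where the real content lies, and I expect it to be the main obstacle. There is no elementary Lagrangian cap to quote, so I must build by hand an embedded Lagrangian $2$-handle whose bottom slice is the non-Legendrian diagram $U_m$ and whose top slice is empty; this is the piece supplied by Sauvaget's construction, and the delicate points are keeping the family of slices embedded as the cap closes off and tracking areas so the primitive extends.

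Finally I would verify exactness. Writing $\beta=e^t\alpha|_{TL}$, the surface is exact iff $\beta$ has vanishing period over every closed loop, and $H_1(L)$ is generated by loops of two kinds. A loop supported in a single slice at height $t_0$ has period $-e^{t_0}\oint y\,dx$, since $\oint dz=0$; by the dictionary $\oint y\,dx$ is the total signed area of its component, which condition (E1) sets to zero. A loop passing through a $1$-handle that merged two components acquires a period equal to the difference of the two primitive constants $\mathfrak c$ on the merged pieces, and condition (E2)---vertical splitting of those pieces---forces these constants to agree, killing the period. Checking that (E1) and (E2) account for a generating set of $H_1(L)$ then yields the global primitive $f$ and completes the exactness claim.
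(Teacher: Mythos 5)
A preliminary remark: the paper states this theorem as a citation to \cite{Lin} (building on \cite{Sau}) and supplies no proof, so there is no in-paper argument to measure yours against; what follows assesses your outline on its own terms. Its global shape --- a movie of slices, one local model per move, periods of $e^t\alpha$ for exactness --- is reasonable, but the gluing step contains a genuine gap. You propose that each slab model be \emph{cylindrical} over its ``before'' and ``after'' diagrams and that consecutive slabs be glued along these shared slices. However, $\R\times\gamma$ is Lagrangian for $d(e^t\alpha)$ if and only if $\gamma$ is Legendrian (one computes $\omega(\partial_t,\dot\gamma)=e^t\alpha(\dot\gamma)$), and the defining feature of these moves --- stressed in the remark following the theorem --- is that the intermediate diagrams $D_i$ are in general \emph{not} Lagrangian projections of Legendrian links. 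Your proposed dictionary (positive region areas plus vanishing total signed area implies an embedded Legendrian lift) fails exactly here: $U_m$ satisfies both conditions and is still not the projection of a Legendrian knot, because the crossing data is incompatible with the lift that the areas determine. For the same reason $R_0$ cannot be realized as ``the trace of an ambient isotopy'': the trace of a curve isotopy is generically not Lagrangian, and the Lagrangian condition is precisely what dictates how the region areas must evolve with $t$ --- this is the source of the directionality of $R_0$ and of the area inequalities in $R_2$ and $R_3$, which your sketch instead attributes to embeddedness of the intermediate slices. A correct argument must use non-cylindrical local normal forms in which the area decorations are part of the surface data, and must match germs of the surface along a slice rather than just the slice curves.

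Beyond that, the two genuinely hard ingredients are deferred rather than supplied. The cap move $C$ --- an embedded Lagrangian disk whose boundary slice lies over the non-Legendrian diagram $U_m$ --- is the essential new content of the theorem (it is what provides a $2$-handle, which no decomposable move does), and pointing to Sauvaget for it is circular in a proof of this statement. On exactness, your period computation for a loop contained in a single slice is correct and does explain (E1); but the assertion that (E2) forces the primitive constants on two merged components to agree is stated, not argued (this is the same subtlety flagged in the remark after Definition~\ref{defn:cobordism} and in \cite{chantraine:disconnected-ends}), and you have not shown that slice loops together with handle loops generate $H_1(L)$ for an arbitrary sequence of moves. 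Both points need to be established before the exactness claim follows.
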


\begin{remark}
\begin{enumerate}
\item For condition (E2), 
the $H_{-}$ can never be applied to merge components, and $H_{+}$ can only be applied if the components being merged are vertically split. 

\item 
 A main distinction between the Lagrangian diagram moves and the decomposable moves is that each diagram $D_i$ in the middle of the sequence is not necessarily the Lagrangian projection of a Legendrian link. 
They are just the $xy$-projection of some time $t_i$-slice of the cobordism.
Thus the Lagrangian diagram moves are more flexible than the decomposable moves.   However, keeping track of the areas is an added complication.
\end{enumerate}
 \end{remark}

\begin{example} Figure~\ref{fig:torus-E1} illustrates the construction of a Lagrangian torus using the Lagrangian diagram moves. This torus fails to be exact since condition (E1) is violated.  Figure~\ref{fig:torus-E2} gives another construction of a Lagrangian torus.  This time,  all components have signed area $0$, but now condition (E2) is violated.

\end{example}

\begin{figure}[!ht]
\labellist
\pinlabel $\emptyset$ at -20 60
\pinlabel $\emptyset$ at 340 60
\pinlabel $F$ at 20 70
\pinlabel $H_-$ at  120 70
\pinlabel $H_+$ at  210 70
\pinlabel $C$ at  305 70
\pinlabel $a$  at 75 30
\pinlabel $a$ at 75 85
\pinlabel $a$  at 163 28
\pinlabel $a$ at 163 85
\pinlabel $a$  at 252 30
\pinlabel $a$ at 252 85
\endlabellist
\includegraphics[width=3in]{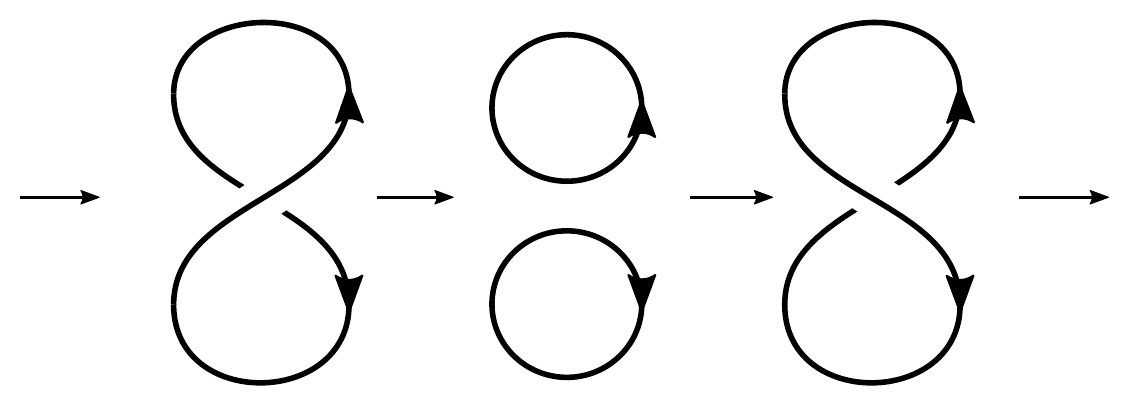}
\caption{A (non-exact) Lagrangian torus constructed using the Lagrangian diagram moves.  The middle figure violates (E1).}
\label{fig:torus-E1}
\end{figure}

\begin{figure}[ht]
\labellist
\small
\pinlabel $\emptyset$ at 60 317
\pinlabel $6$ at 150 315
\pinlabel $6$ at 200 315

\pinlabel $6$ at 310 315
\pinlabel $5$ at 360 315
\pinlabel $0$ at 400 317
\pinlabel $1$ at 435 315

\pinlabel $6$ at 520 315
\pinlabel $5$ at 570 320
\pinlabel $3$ at 610 320
\pinlabel $4$ at 645 320

\pinlabel $6$ at 520 205
\pinlabel $5$ at 565 205
\pinlabel $1$ at  588 202
\pinlabel $3$ at 610 205
\pinlabel $4$ at 645 205

\pinlabel $6$ at 265 220
\pinlabel $0$ at 285 202
\pinlabel $1$ at 345 208
\pinlabel $5$ at  320 210
\pinlabel $3$ at 365 210
\pinlabel $4$ at 410 210
\pinlabel $0$ at 355 181

\pinlabel $6$ at 20 230
\pinlabel $1$ at 113 212
\pinlabel $5$ at  80 215
\pinlabel $3$ at 135 215
\pinlabel $4$ at 170 215
\pinlabel $0$ at  125 187

\pinlabel $2$ at 30 90
\pinlabel $1$ at 100 82
\pinlabel $5$ at  75 88
\pinlabel $3$ at 125 85
\pinlabel $0$ at 165 90
\pinlabel $4$ at  120 55

\pinlabel $5$ at 250 90
\pinlabel $5$ at 290 92
\pinlabel $1$  at 310 85
\pinlabel $4$ at 320 60

\pinlabel $9$ at 500 85
\pinlabel $9$ at 450 92
\pinlabel $1$  at 473 90
\pinlabel $0$ at 467 59

\pinlabel $9$ at 590 75
\pinlabel $9$ at 640 75
\pinlabel $\emptyset$ at  730 70

\endlabellist
\includegraphics[width=5in]{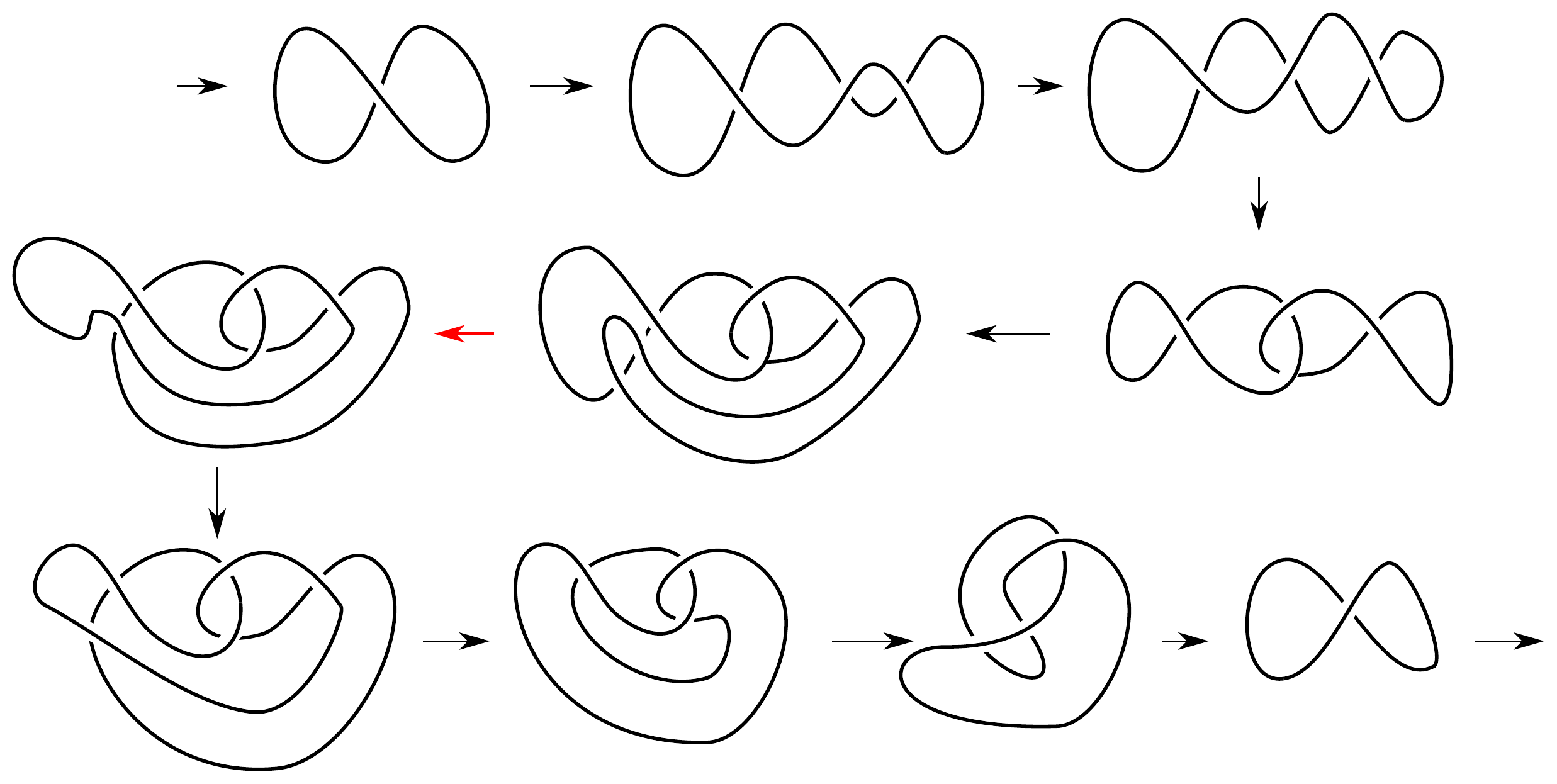}
\caption{A (non-exact) Lagrangian torus constructed using the Lagrangian diagram moves.  These figures satisfy (E1) but (E2) is violated in the step labelled by a red arrow.}
\label{fig:torus-E2}
\end{figure}

\section{Geometrical Constructions of Lagrangian Cobordisms} \label{sec:satellites}

An important general way to know of the existence of Lagrangian cobordisms without using the 
constructions described in Section~\ref{sec:construct} comes through the satellite operation.  
In this section, we review the satellite construction and then state results from \cite{CNS, GSY} about the existence of  a  Lagrangian concordance/cobordism from $\leg_{-}$ to $\leg_{+}$ implying the existence of
a Lagrangian concordance/cobordism between corresponding satellites.

\subsection{The Legendrian Satellite Construction}\label{sec:sat}
 We begin by reviewing the construction of a Legendrian satellite; see also   \cite[Appendix]{NgTra} and \cite[Section 2.2]{CNS}.
To construct a Legendrian satellite, begin by identifying the open solid torus $S^{1} \times \R^{2}$ with the $1$-jet space of the circle, $J^{1}S^{1} \cong T^{*}S^{1} \times \R$, equipped with the
contact form $\alpha = dz - ydx$, where $x,y$ are the coordinates in $T^{*}S^{1}$ and $z$ is the coordinate in $\R$.  Similar to the situation for $\R^{3} \cong J^{1}\R$, we can recover a Legendrian
knot in $J^{1}S^{1}$ from its front projection in $S^{1}_{x} \times \R_{z}$, which is typically drawn by representing $S^{1}$ as an interval with its endpoints identified.  

Given an oriented Legendrian {\bf companion} knot $\Lambda \subset \R^3$ and a oriented Legendrian {\bf pattern} knot $P \subset J^1(S^1)$, the Legendrian
 neighborhood theorem says that $\leg$ has a standard neighborhood $N(\leg)$
such that there is a contactomorphism $\kappa: J^{1}(S^{1}) \to N(\leg)$.  The {\bf Legendrian satellite}, $S(\Lambda, P)$, is then the image $\kappa(P)$.  
 The  front projection of  $S(\Lambda, P)$ is as shown in Figure~\ref{fig:sat}.
In particular, suppose that the front projection of the
 pattern $P$ intersects the vertical line at the  boundary  of the $S^{1}$ interval
  $n$ times. We then make an $n$-copy of $\Lambda$ by using  $n$-disjoint copies of $\Lambda$ that all differ by small translations in the $z$-direction.
 Take a point  on the front projection of $\Lambda$ that is oriented from left to right, cut the front of the $n$-copy open along the $n$-copy at that point, and insert the front diagram of $P$.
 The orientation on the satellite $S(\Lambda, P)$ is induced by the orientation on $P$.

\begin{figure}[!ht]
\labellist
\small
\pinlabel $\Lambda$ at -10 130
\pinlabel $P$ at 0 20
\pinlabel $S(\Lambda,P)$ at  290 -5
\endlabellist
\includegraphics[width=3in]{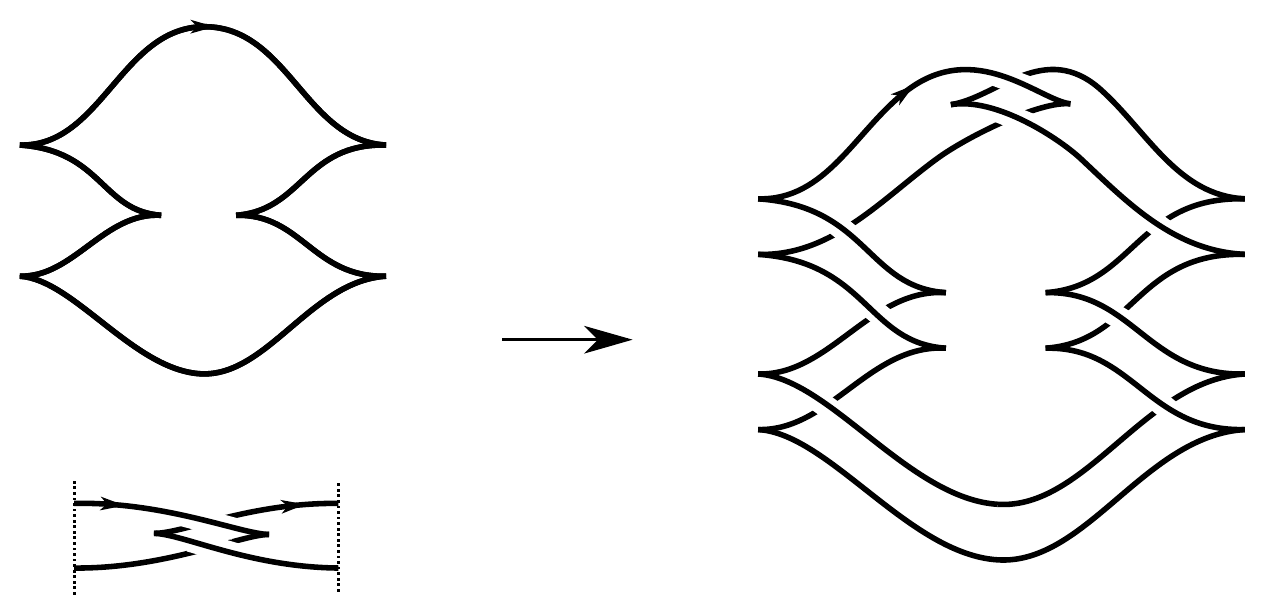}
\caption{A example of  Legendrian satellite.}
\label{fig:sat}
\end{figure}

\begin{remark}\label{rem:sat-nicer}
The satellite operation often makes Legendrian knots ``nicer''; for example, in Figure~\ref{fig:sat}, the companion $\leg$ is stabilized and does not admit an augmentation or a normal  ruling.
However, the satellite $S(\leg, P)$ does admit a normal  ruling and augmentation.
\end{remark}

\subsection{Lagrangian Cobordisms for Satellites}

In \cite[Theorem 2.4]{CNS}, Cornwell, Ng, and Sivek, show that Lagrangian {\it concordance} is preserved by the Legendrian
satellite operation.   

\begin{theorem}[{\cite{CNS}}] \label{thm:satellite} Suppose $P \subset J^{1}S^{1}$ is a Legendrian knot.  If there exists a Lagrangian
concordance $L$ from a Lengendrian knot $\leg_{-}$ to a Lengendrian knot $\leg_{+}$, then there exists a Lagrangian concordance $L_{P}$ from $S(\leg_{-}, P)$
to $S(\leg_{+}, P)$.
\end{theorem}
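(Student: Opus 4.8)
The plan is to upgrade the Legendrian neighborhood theorem, which already underlies the definition of the satellite, to a statement about symplectizations, and then to produce $L_P$ as the image of the trivial Lagrangian cylinder over $P$ under a carefully chosen neighborhood identification of $L$.

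First I would record the functoriality of symplectization. If $\phi\colon (Y_1,\alpha_1)\to(Y_2,\alpha_2)$ is a contactomorphism, then $\phi^*\alpha_2=e^{g}\alpha_1$ for some $g\colon Y_1\to\R$, and the map $\tilde\phi(t,q)=(t-g(q),\phi(q))$ satisfies $\tilde\phi^*(e^t\alpha_2)=e^t\alpha_1$, so it is an exact symplectomorphism of the two symplectizations. Applying this to the contactomorphisms $\kappa_\pm\colon J^1(S^1)\to N(\leg_\pm)$ furnished by the Legendrian neighborhood theorem yields exact symplectomorphisms $\tilde\kappa_\pm\colon \R\times J^1(S^1)\to \R\times N(\leg_\pm)$. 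The zero section $Z=\{y=z=0\}\subset J^1(S^1)$ is a Legendrian circle whose trivial cylinder $C_0=\R\times Z$ is an exact Lagrangian cylinder, and $\tilde\kappa_\pm(C_0)=\R\times\leg_\pm$, matching the cylindrical ends of $L$.

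Next I would establish a relative neighborhood theorem: there is an exact symplectomorphism $\Phi$ from a neighborhood of $C_0$ in $\R\times J^1(S^1)$ onto a neighborhood of $L$ in $(\R\times\R^3,d(e^t\alpha))$, with $\Phi(C_0)=L$ and with $\Phi$ agreeing with $\tilde\kappa_\pm$ over the two ends $t\to\pm\infty$. Since $L$ is an exact Lagrangian cylinder that is cylindrical at its ends over $\leg_\pm$, a Weinstein-type neighborhood theorem carried out in the Liouville setting (so as to preserve exactness) produces such a $\Phi$; the remaining work is to arrange the prescribed end behavior, which one does by first choosing any such neighborhood and then correcting it near the ends, using that two exact Lagrangian neighborhood identifications differ by a fiber-preserving symplectomorphism that can be deformed to the identity by a Moser argument. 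With $\Phi$ in hand, I would shrink the pattern toward the zero section by the conformal contactomorphism $\sigma_\delta(x,y,z)=(x,\delta y,\delta z)$, which satisfies $\sigma_\delta^*(dz-y\,dx)=\delta(dz-y\,dx)$, so that $\sigma_\delta(P)$ is Legendrian and Legendrian isotopic to $P$, and the cylinder $\R\times\sigma_\delta(P)$ lies inside the domain of $\Phi$ for $\delta$ small. I then define $L_P=\Phi(\R\times\sigma_\delta(P))$.

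Finally I would verify Definition~\ref{defn:cobordism}. The set $L_P$ is embedded, orientable, and exact because the trivial cylinder $\R\times\sigma_\delta(P)$ over a Legendrian knot is embedded, orientable, and has $e^t\alpha$ vanishing on it, and $\Phi$ is an exact symplectomorphism; in particular the induced primitive $f$ is locally constant on the Legendrian ends, giving condition (4). It is a concordance because $\R\times\sigma_\delta(P)\cong\R\times S^1$ has genus zero. Over the ends, $\Phi=\tilde\kappa_\pm$ gives $L_P\cap((N,\infty)\times\R^3)=\R\times\kappa_+(\sigma_\delta(P))$, and since $\kappa_+(\sigma_\delta(P))=S(\leg_+,\sigma_\delta(P))$ is Legendrian isotopic to $S(\leg_+,P)$, composing with the trace of that isotopy (an elementary Lagrangian concordance) upgrades the ends to $S(\leg_\pm,P)$, as desired. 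The main obstacle is the relative neighborhood theorem: producing a single exact symplectomorphism $\Phi$ that is simultaneously a Weinstein neighborhood of the compact core of $L$ and agrees with the standard symplectized contact neighborhoods $\tilde\kappa_\pm$ at both noncompact ends, while respecting exactness so that the primitive on $L_P$ is constant on each end.
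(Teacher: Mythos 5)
Your proposal is correct in outline and follows essentially the same route as the proof in Cornwell--Ng--Sivek (the survey itself states Theorem~\ref{thm:satellite} without proof and defers to \cite{CNS}): namely, upgrade the Legendrian neighborhood theorem to an identification of a neighborhood of the concordance $L$ with a neighborhood of the zero section in the symplectization of $J^{1}(S^{1})$, arranged to agree with the symplectized contactomorphisms $\tilde\kappa_{\pm}$ over the cylindrical ends, and then take $L_P$ to be the image of the trivial cylinder over a copy of $P$ shrunk toward the zero section. You correctly isolate the one genuinely technical point --- the relative, end-compatible Weinstein-type neighborhood theorem for the exact Lagrangian cylinder --- which is precisely the content of the argument in \cite{CNS}.
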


In particular, as shown in Figure~\ref{fig:m946}, there is a Lagrangian concordance from $\leg_{-}$, which is the Legendrian unknot with $tb = -1$,
to  $\leg_{+}$, which is the Legendrian  $m(9_{46})$ with maximal $tb = -1$.  Using the Legendrian ``clasp'' tangle $P$ as shown in Figure~\ref{fig:sat} -- which produces the
Legendrian Whitehead double -- we can conclude that there exists a Lagrangian concordance 
from $S(\leg_{-}, P)$ to $S(\leg_{+}, P)$.  In fact, $S(\leg_{-}, P)$ is the positive trefoil with $tb = 1$.  Thus Theorem~\ref{thm:satellite} implies that 
  there exists a Lagrangian concordance
between the Legendrian knots in Figure~\ref{fig:satellite}.
 
\begin{conjecture}[{\cite[Conjecture 3.3]{CNS}}]  \label{conj:cns}
 The Lagrangian concordance from $S(\leg_{-}, P)$ to $S(\leg_{+}, P)$ built through the satellite construction is not decomposable. 
  \end{conjecture}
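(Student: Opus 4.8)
The plan is to attack the conjecture through the smooth-topological shadow of decomposability, using the structural fact already recorded in the introduction: every decomposable Lagrangian cobordism between $1$-dimensional Legendrians is ribbon, assembled only from $0$- and $1$-handles. Hence if the satellite concordance $L_P$ from $S(\leg_-,P)$ (the max-$tb$ positive trefoil) to $S(\leg_+,P)$ (the Legendrian Whitehead double of $m(9_{46})$) were decomposable, its underlying smooth surface would be a \emph{smooth ribbon concordance} from the trefoil to the Whitehead double, oriented so that only minima and saddles occur as one moves up in the $\R_t$-direction. Thus it suffices to rule out the existence of \emph{any} smooth ribbon concordance in this direction. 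First I would make this reduction precise and fix the framing determined by the $tb=-1$ data of the companions, so that $S(\leg_+,P)$ is pinned down as a specific Whitehead double whose companion $m(9_{46})$ has known smooth concordance invariants.

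The main engine I would deploy is the Floer-theoretic rigidity of ribbon concordances, exactly the Heegaard-Floer pathway flagged in Section~\ref{sec:non-decomp-candidates}: a ribbon concordance induces a grading-preserving (indeed split) injection from the knot Floer homology of its incoming end into that of its outgoing end. Here the incoming end is the trefoil, whose $\widehat{HFK}$ has total rank three with generators in prescribed Maslov and Alexander bidegrees, and the outgoing end is the Whitehead double. I would compute $\widehat{HFK}$ and the $\mathbb F[U]$-module $HFK^-$ of the double via Hedden's satellite formula, which expresses them through the filtered chain-homotopy type and the $\tau$-invariant of $m(9_{46})$, and then search for an incompatibility: a failure of the required injection to exist while simultaneously respecting both gradings and the $U$-module structure.

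A complementary route, to be run in parallel as a consistency check and as a second source of obstructions, stacks the known decomposable genus-one filling of the max-$tb$ trefoil (Example~\ref{ex:tref}) beneath the hypothetical decomposable concordance. This produces a decomposable, hence ribbon, genus-one Lagrangian filling of $S(\leg_+,P)$, and smoothly a genus-one ribbon surface for the Whitehead double in $B^4$. I would then try to contradict its existence with smooth four-genus obstructions---$\tau$, $s$, $\Upsilon$, or $d$-invariants of branched covers extracted through the contact-surgery correspondence of Section~\ref{ssec:Floer}---by forcing the ribbon (equivalently smooth four-) genus of this Whitehead double to exceed one.

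The hard part will be that a Lagrangian concordance genuinely exists here by Theorem~\ref{thm:satellite}, so every ``soft'' symplectic or contact obstruction---the $tb$ and $r$ constraints, the augmentation count, and the ruling-polynomial inequalities of Section~\ref{ssec:obstructions}---is automatically satisfied and gives no traction whatsoever; the obstruction must be sensitive to the ribbon condition itself, which lives in smooth four-dimensional topology. Worse still, the favorable \emph{direction} of the concordance, with the trefoil at the bottom, means the naive rank count in the ribbon injection points the easy way and does not obstruct on its own. The genuine content is therefore to promote the injection to a graded, $U$-equivariant statement rigid enough to be violated, or to sharpen the filling reduction into an honest lower bound on the four-genus of the Whitehead double; isolating such a refinement is precisely the open difficulty that keeps \cite[Conjecture 3.3]{CNS} unresolved.
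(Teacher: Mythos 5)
This statement is a \emph{conjecture} in the paper (Conjecture 3.3 of \cite{CNS}, reproduced as Conjecture~\ref{conj:cns}); the paper offers no proof, and the surrounding text makes clear it is open. Your proposal is likewise not a proof but a research plan, and to your credit you say so explicitly in the final sentence. So the honest verdict is that there is a genuine gap: neither of your two routes closes, and you have correctly diagnosed why the first one fails. The reduction to smooth ribbon concordance is sound (decomposable $\Rightarrow$ only $0$- and $1$-handles $\Rightarrow$ ribbon), and Zemke-type rigidity does give an injection $\widehat{HFK}(\text{trefoil}) \hookrightarrow \widehat{HFK}(\text{Whitehead double})$, but as you note the trefoil sits at the \emph{bottom}, its rank-three homology injects into essentially anything, and no amount of grading or $U$-equivariant refinement has been shown to rule this out. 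This is exactly why the conjecture remains open.

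Your second route, however, contains an error worth flagging beyond mere incompleteness. Stacking the genus-one decomposable filling of the max-$tb$ trefoil under the (genuinely existing) Lagrangian concordance already produces an honest genus-one exact Lagrangian filling of $S(\leg_+,P)$ — no hypothesis of decomposability needed. By Obstruction (3) of Section~\ref{ssec:obstructions} and slice-Bennequin, $tb(S(\leg_+,P)) = 1$ forces $g_4 \geq 1$, so the smooth four-genus of this Whitehead double is \emph{exactly} one. Consequently no invariant bounding the smooth four-genus ($\tau$, $s$, $\Upsilon$, $d$-invariants of branched covers) can possibly push it above one, and your parenthetical ``ribbon (equivalently smooth four-) genus'' conflates two quantities whose equality is itself a form of the slice-ribbon problem. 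That route is a dead end in principle, not just in practice. If you want to pursue the conjecture, the paper's own Sections~\ref{sec:rul}--\ref{ssec:Floer} suggest the viable targets: a ruling-count or GRID-invariant asymmetry between $S(\leg_-,P)$ and $S(\leg_+,P)$, or an obstruction genuinely sensitive to the ribbon condition rather than to genus.
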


Theorem~\ref{thm:satellite} has been extended to higher genus cobordisms by Guadagni, Sabloff, and Yacavone in \cite{GSY}.   
To state their theorem,
 we need to first introduce the notion of ``twisting'' and then closing a tangle $T \subset J^1[0,1]$.  Given a Legendrian tangle $T \subset J^{1}[0,1]$, $\Delta T$
  is the tangle obtained by adding the tangle $T$ and the full twist tangle $\Delta$, which 
is illustrated in Figure~\ref{fig:full-twist}; the tangle $\Delta^{t} T$ can be thought of as $T$ followed by $t$ full twists.  Given a Legendrian tangle $T \subset J^{1}[0,1]$, $\overline{T} \subset J^1(S^1)$ will
 denote the associated closure to a Legendrian link.

\begin{figure}[!ht]
\labellist
\pinlabel $\vdots$ at 150 100
\endlabellist
\includegraphics[width=1in]{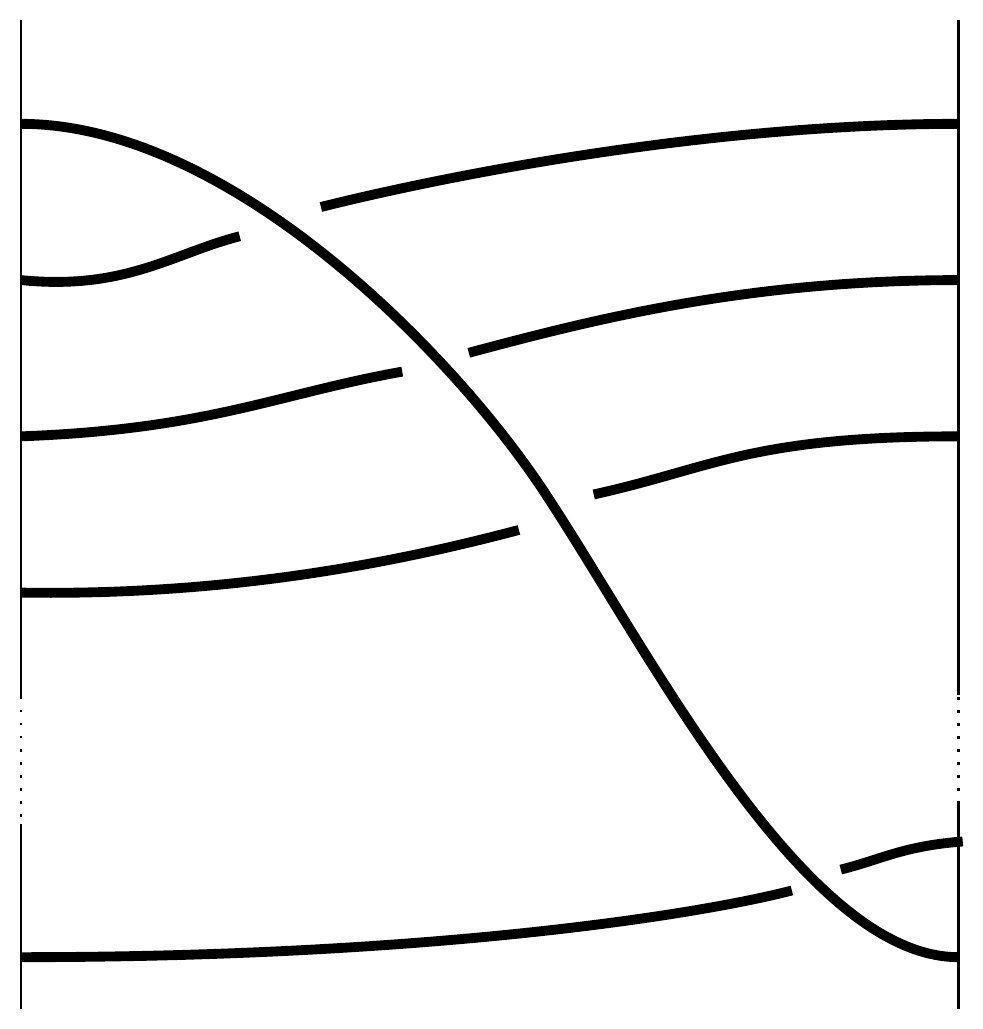}
\caption{For an $n$-stranded tangle, repeating this basic tangle $n$ times produces a full twist. }
\label{fig:full-twist}
\end{figure}


\begin{theorem}[\cite{GSY}] \label{thm:sat-genus}
Suppose $T \subset J^{1}[0,1]$ is a Legendrian tangle whose closure $\overline{T} \subset J^{1}(S^{1})$ is a Legendrian knot.  If there exists a  Lagrangian
cobordism $L$ from  $\leg_{-}$ to $\leg_{+}$ of genus $g(L)$, then there exists a Lagrangian cobordism $L_{T}$ from
$S(\leg_{-}, \overline{\Delta^{2g(L) + 1} T})$ to $S(\leg_{+}, \overline{\Delta T})$.
\end{theorem}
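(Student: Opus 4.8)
The plan is to realize $L_T$ as an embedded, exact, orientable Lagrangian surface living inside a Weinstein neighborhood of $L$, with the genus of $L$ supplying exactly the discrepancy of $2g(L)$ full twists between its two ends. Throughout I would treat $L$ as an arbitrary (possibly nondecomposable) cobordism, so the argument must be geometric rather than a handle-by-handle combinatorial one.

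First I would fix a Weinstein Lagrangian neighborhood $\mathcal N$ of $L$ in $(\R\times\R^3, d(e^t\alpha))$, identified symplectically with a neighborhood of the zero section of $T^*L$ and arranged to be cylindrical over the ends where $L=\R\times\leg_\pm$; there $\mathcal N$ restricts to the standard products $\R\times N(\leg_\pm)$ with $N(\leg_\pm)\cong J^1(S^1)$, which are precisely the neighborhoods used in the Legendrian satellite construction of Section~\ref{sec:sat}. Writing $n$ for the number of strands of $T$, the skeleton of the construction is the $n$-copy of $L$: $n$ parallel Lagrangian push-offs of $L$ inside $\mathcal N$, ordered along a normal ``Reeb'' direction so as to stay disjoint and exact, restricting over the ends to the standard $n$-copies of $\leg_\pm$. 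Into this cabled neighborhood I would insert the pattern $T$ fiberwise, transporting it along $L$ by means of a chosen trivialization (framing) of the relevant normal data.

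The heart of the argument is this framing bookkeeping. Transporting $T$ along $L$ requires a trivialization compatible with the framings fixed on the two ends, and since the normal bundle of a Lagrangian surface is isomorphic to $T^*L$, the obstruction to a globally consistent trivialization is a relative Euler number whose value $-\chi(L)=2g(L)$ is exactly the quantity appearing in Obstruction~(3), where $tb(\leg_+)-tb(\leg_-)=-\chi(L)=2g(L)$: the contact framing drops by $2g(L)$ as one descends from $\leg_+$ to $\leg_-$. Normalizing the framing so that $T$ closes up over the top as $\overline{\Delta T}$ --- the single baseline twist already present in the concordance case of Theorem~\ref{thm:satellite} --- the framing defect then forces exactly $2g(L)$ additional positive full twists at the bottom, so $T$ closes up there as $\overline{\Delta^{2g(L)+1}T}$. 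Because a full twist $\Delta$ is a \emph{pure} braid, every closure $\overline{\Delta^kT}$ has the same number of components as $\overline{T}$; since $\overline{T}$ is a knot, so are all of these closures, and the ends of $L_T$ are the asserted satellites $S(\leg_-,\overline{\Delta^{2g(L)+1}T})$ and $S(\leg_+,\overline{\Delta T})$.

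Finally I would verify that $L_T$ is a legitimate cobordism: embedded, provided $\mathcal N$ is thin and the push-offs stay disjoint over all of $L$; orientable, with orientation assembled from those of $L$ and $P$; and exact, with a primitive built from the function $f$ of $L$ together with that of the pattern cylinder, once the boundary constants $\frak c_\pm$ are checked to agree across all components (as in the gluing discussion for disconnected ends). The step I expect to be hardest is making the ``transport of $T$ along $L$'' honestly exact and embedded over a surface of positive genus: the framing computation fixes only the homotopy-theoretic count of twists, but producing a genuine exact Lagrangian in which those $2g(L)$ twists are generated for free by the handles of $L$, rather than inserted by hand, is where the real work lies.
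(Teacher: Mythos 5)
The paper itself offers no proof of Theorem~\ref{thm:sat-genus}: it is quoted from \cite{GSY}, so the comparison has to be with the geometric strategy that reference carries out rather than with anything printed here. Your sketch does identify the right global mechanism --- work in a Weinstein neighborhood of $L$ that is cylindrical over the ends, reduce to the concordance picture of Theorem~\ref{thm:satellite} there, and account for the twist discrepancy via the framing defect $tb(\leg_+)-tb(\leg_-)=-\chi(L)=2g(L)$ --- and your observation that $\Delta$ is a pure braid, so every closure $\overline{\Delta^{k}T}$ is again a knot, is correct and needed.

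There is, however, a genuine gap at the center, and it is larger than the caveat in your final sentence admits. Your skeleton is ``$n$ parallel Lagrangian push-offs of $L$ \dots ordered along a normal direction so as to stay disjoint and exact, restricting over the ends to the standard $n$-copies.'' In the Weinstein neighborhood such push-offs are graphs of exact one-forms $df_j$ on $L$; disjointness of consecutive copies forces $d(f_{j+1}-f_j)$ to be nowhere vanishing, while the requirement that the ends be the standard $z$-shifted $n$-copies of $\leg_\pm$ pins down $f_{j+1}-f_j$ on the cylindrical ends. Morse theory for the pair $(L,\leg_-)$ then forces critical points whose signed count is $\chi(L)=-2g(L)$, so for $g(L)>0$ the disjoint exact $n$-copy of $L$ with standard ends that you propose to insert $T$ into does not exist. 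Those unavoidable critical points are not a technicality: they are precisely where the $2g(L)$ extra full twists are created, and the substance of the theorem is an explicit local Lagrangian model near each one that resolves the would-be intersections of the copies by twisting the cable while preserving embeddedness and exactness. Your Euler-class bookkeeping correctly predicts how many twists must appear, but it only determines what the ends of $L_T$ would have to be if $L_T$ existed; it does not construct $L_T$, which is the actual content of the result.
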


 In fact, Theorem~\ref{thm:sat-genus} can be generalized to use the closure of different tangles $T_{-}$ and $T_{+}$  that are Lagrangian cobordant; for details, see \cite{GSY}.


\begin{remark}
It is natural to wonder if,  along the lines of Conjecture~\ref{conj:cns},  this higher genus satellite procedure 
can create additional {\it candidates} for Legendrians that can be connected by a Lagrangian cobordism but not by a 
 decomposable Lagrangian cobordism.   In \cite[Theorem 1.5]{GSY},  it is shown that 
if the cobordism $L$ from $\leg_{-}$ to $\leg_{+}$ is decomposable and the handles in the decomposition satisfy conditions known
as ``Property A'', then the corresponding satellites $S(\leg_{-}, \Delta^{2g(L) + 1} P)$ and $S(\leg_{+}, \Delta P)$ will also
be connected by a decomposable Lagrangian cobordism.  In particular,  if there exists a decomposable cobordism $L$ that does {\it not} satisfy
Property A and is not isotopic to a cobordism that satisfies Property A, then the satellite construction would lead to a higher
genus candidate that generalizes Conjecture~\ref{conj:cns}. 
\end{remark}

\subsection{Obstructions to Cobordisms through Satellites} \label{ssec:sat-obstruct}
 In Section~\ref{ssec:obstructions}, some known obstructions to the existence of a Lagrangian cobordism were mentioned.  As mentioned in Remark~\ref{rem:no-stab-obstruct},
  a  number of these obstructions
 require $\leg_{-}$ to admit an augmentation, and thus in particular $\leg_{-}$ must be non-stabilized. However, as mentioned in Remark~\ref{rem:sat-nicer}, it is possible for 
 the satellite of a Legendrian $\leg$ to admit an augmentation even if $\leg$ does not.  So
the contrapositive of Theorem~\ref{thm:satellite} provides a potential strategy for further obstructions to the existence of a Lagrangian cobordism when $\leg_{-}$ does not
admit an augmentation.  For example,  motivated by Obstruction (4) in Section~\ref{ssec:obstructions}, one can ask:
{\it Can a count of augmentations give an obstruction to the existence of a Lagrangian  concordance from $S(\leg_{-}, P)$ to
$S(\leg_{+}, P)$ and thereby obstruct the existence of a Lagrangian concordance from $\leg_{-}$ to $\leg_{+}$?}
In fact, this augmentation count will not likely provide a further obstruction:
a simple computation shows that when $\leg$ is stabilized enough,  the number of augmentations  of $S(\Lambda, P)$ only depends on the Legendrian pattern  $P$.
If trying to pursue this path to obtain further obstructions to Lagrangian cobordisms, it is useful to keep in mind   the following result of Ng that shows  the DGA of the satellite of a Legendrian $\leg$ might only remember
the underlying knot type of $\leg$.
\begin{theorem}[\cite{Nthesis}]
Suppose $\Lambda_1$ and $\Lambda_2$ are stabilized Legendrian knots that are of the same topological knot type and have the same Thurston-Bennequin and rotation numbers.
For a Legendrian pattern $P$ whose front intersects a vertical line by two points, the DGAs  of $S(\Lambda_1, P)$  and $S(\Lambda_2, P)$ are equivalent.
\end{theorem}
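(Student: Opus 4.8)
The plan is to read ``equivalent'' as \emph{stable tame isomorphic}, the relation under which the Chekanov--Eliashberg DGA $\mathcal A(\cdot)$ is a Legendrian isotopy invariant, and to reduce the theorem to a single combinatorial lemma together with the Fuchs--Tabachnikov stabilization theorem. Two standard facts drive the reduction. First, $\mathcal A(\cdot)$ is invariant up to stable tame isomorphism under Legendrian isotopy, and its stable tame isomorphism class is by definition unchanged by an algebraic stabilization, i.e.\ by adjoining a pair of generators $e_1,e_2$ with $\partial e_2 = e_1$, $\partial e_1 = 0$. Second, since $\Lambda_1$ and $\Lambda_2$ are of the same smooth knot type with $tb(\Lambda_1)=tb(\Lambda_2)$ and $r(\Lambda_1)=r(\Lambda_2)$, the Fuchs--Tabachnikov theorem yields $a,b\ge 0$ with $S_+^aS_-^b\Lambda_1$ Legendrian isotopic to $S_+^aS_-^b\Lambda_2$; equality of $tb$ and $r$ on the two sides forces the same number of positive and of negative stabilizations, so both companions are made isotopic after the same amount of stabilization.

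The heart of the matter is the lemma that, \emph{when $\Lambda$ is already stabilized}, a further stabilization changes the satellite DGA only by an algebraic stabilization: for a two-stranded pattern $P$, the DGA of $S(S_\pm\Lambda,P)$ is stable tame isomorphic to that of $S(\Lambda,P)$. Granting this, iterating the lemma $a+b$ times gives
\[
\mathcal A\big(S(\Lambda_i,P)\big)\;\cong\;\mathcal A\big(S(S_+^aS_-^b\Lambda_i,P)\big),\qquad i=1,2,
\]
and the two right-hand sides are isomorphic because $S_+^aS_-^b\Lambda_1$ and $S_+^aS_-^b\Lambda_2$ are Legendrian isotopic, and the satellites of Legendrian-isotopic companions with a fixed pattern are themselves Legendrian isotopic. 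Chaining these identifications proves the theorem.

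To prove the lemma I would work in the combinatorial model for the satellite of a two-stranded pattern: since $P$ meets a vertical line twice, $S(\Lambda,P)$ is the perturbed $2$-copy of $\Lambda$ with $P$ spliced in, and its Reeb chords split into pattern chords from $P$ and companion chords from the doubled front of $\Lambda$ (the chords $a_{ij}$, $i,j\in\{1,2\}$, attached to each Reeb chord $a$ of $\Lambda$, together with the short chords created by the $2$-copy perturbation at cusps). A stabilization of $\Lambda$ inserts a zig-zag into its front, and in the doubled picture this contributes a bounded family of new short chords whose differentials I would compute explicitly. The aim is to show that these new chords constitute, after a tame isomorphism, exactly an algebraic stabilization of $\mathcal A(S(\Lambda,P))$: they can be organized into cancelling pairs $\partial e_2 = e_1 + (\text{higher order})$, and cancelling them via the reduction lemma for DGAs returns the differential of $S(\Lambda,P)$ on the remaining generators.

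The step I expect to be the main obstacle is this last identification, and it is where the hypothesis that $\Lambda$ is \emph{already} stabilized is essential. Heuristically, the \emph{first} stabilization of the companion is what alters the satellite DGA substantively: it introduces a generator whose differential acquires a unit lowest-order term, the algebraic reflection of the fact that the resulting satellite may fail to admit (or may change its count of) augmentations. Once such a generator is present, each \emph{further} stabilization of the now-stabilized companion only adjoins a cancelling pair, with no new unit term and no surviving companion-dependent monomials. Verifying this dichotomy requires carefully tracking the words appearing in the differentials of the new chords and their neighbours and confirming that all companion-dependent contributions are absorbed into the cancellation. Once established, the surviving DGA is governed only by the smooth knot type of $\Lambda$ (through the Reidemeister-invariant crossing combinatorics of its front), by the framing $tb(\Lambda)$, and by the grading fixed by $r(\Lambda)$, which is exactly the assertion of the theorem.
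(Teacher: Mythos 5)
Your overall architecture --- Fuchs--Tabachnikov to make the companions Legendrian isotopic after equal numbers of positive and negative stabilizations, a lemma controlling the effect of a companion stabilization on the satellite DGA, and an induction --- has the right shape, but the key lemma you propose is false, and the chain of isomorphisms built on it collapses. You assert that for a two-stranded pattern $P$ and an already-stabilized $\Lambda$, the DGA of $S(S_\pm\Lambda,P)$ is stable tame isomorphic to that of $S(\Lambda,P)$. It cannot be: the satellite is taken with respect to the contact framing, so for a two-stranded pattern $tb(S(\Lambda,P))=4\,tb(\Lambda)+c(P)$ with $c(P)$ depending only on the pattern, and a single stabilization of the companion drops the satellite's Thurston--Bennequin number by $4$ (and in general changes its smooth knot type, e.g.\ the twisting of a Whitehead double). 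Since a Reeb chord has even grading exactly when the corresponding crossing of the Lagrangian projection is positive, $tb$ equals the number of even-degree generators minus the number of odd-degree generators, a quantity preserved both by tame isomorphisms and by algebraic stabilizations; hence DGAs of Legendrians with different $tb$ are never stable tame isomorphic. So $\mathcal A\big(S(\Lambda_i,P)\big)\not\cong\mathcal A\big(S(S_+^aS_-^b\Lambda_i,P)\big)$ whenever $a+b>0$, and your displayed reduction fails at the first step. Your heuristic that the first companion stabilization plants a unit term in some differential is also off the mark: as Remark~\ref{rem:sat-nicer} illustrates, satellites of stabilized companions frequently do admit augmentations, so no such unit term can be present in general.

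The repair, and the natural route to the cited result, is to keep the companion fixed and push the stabilization into the pattern: a zigzag in the front of $\Lambda$ becomes a doubled zigzag in the front of $S(\Lambda,P)$, which can be slid along the satellite and absorbed into the splicing region, giving a Legendrian isotopy $S(S_\pm\Lambda,P)\simeq S(\Lambda,Z_\pm P)$ for a modified pattern $Z_\pm P\subset J^1(S^1)$ --- an operation that honestly changes the satellite's $tb$, as it must. One then applies Fuchs--Tabachnikov to identify $S(S_+^aS_-^b\Lambda_1,P)$ with $S(S_+^aS_-^b\Lambda_2,P)$ and must cancel the extra doubled zigzags $Z_+^aZ_-^b$ against the zigzag already present in the stabilized companions; this cancellation, carried out either by an isotopy in the solid torus or at the level of the combinatorial DGA of the spliced diagram, is exactly where the hypothesis that $\Lambda_1,\Lambda_2$ are stabilized enters, and it is the real content of the theorem. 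That step is entirely absent from your write-up: as it stands you have only re-derived the fact that satellites of Legendrian-isotopic companions are Legendrian isotopic.
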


\section{Candidates for Non-Decomposable Lagrangian Cobordisms} \label{sec:non-decomp-candidates}

Now that we have developed some ways to construct a Lagrangian cobordism through combinatorial moves and satellites, we state some theorems that show
{\it if} a Lagrangian cobordism does exist, then 
it cannot be decomposable: this addresses Motivating Question~(\ref{ques:decomp}).  
While we discuss these theorems, it is useful to keep in mind the known  obstructions to Lagrangian cobordisms that were mentioned in 
 Section~\ref{ssec:obstructions}.

\subsection{Candidates for Non-decomposable Lagrangian Cobordisms from Normal Rulings}\label{sec:rul} One simple way to show that two Legendrians $\leg_\pm$ cannot be
connected by a decomposable Lagrangian cobordism comes from a count of ``combinatorial'' rulings.
 Roughly,  
a {\bf normal ruling} of a Legendrian $\Lambda$ is a ``decomposition'' of the front projection into pairs of paths from left cusps to right cusps such that 
\begin{enumerate}
\item each pair of paths starts from a common left cusp and ends at a common right cusp, has no further intersections, and bounds a topological disk whose boundary is smooth everywhere other than at the cusps and certain crossings  called {\bf switches}, and
\item near a switch, the pair of paths must be arranged as in one of the diagrams in Figure~\ref{fig:rul}; observe that near the switch,  vertical slices of the associated disks are either disjoint or the slices of one are contained in the slices of the other.
\end{enumerate}
Formal definitions of normal  rulings can be found in, for example, \cite{CP} and \cite{Fuchs}. 
\begin{figure}[!ht]
\includegraphics[width=4in]{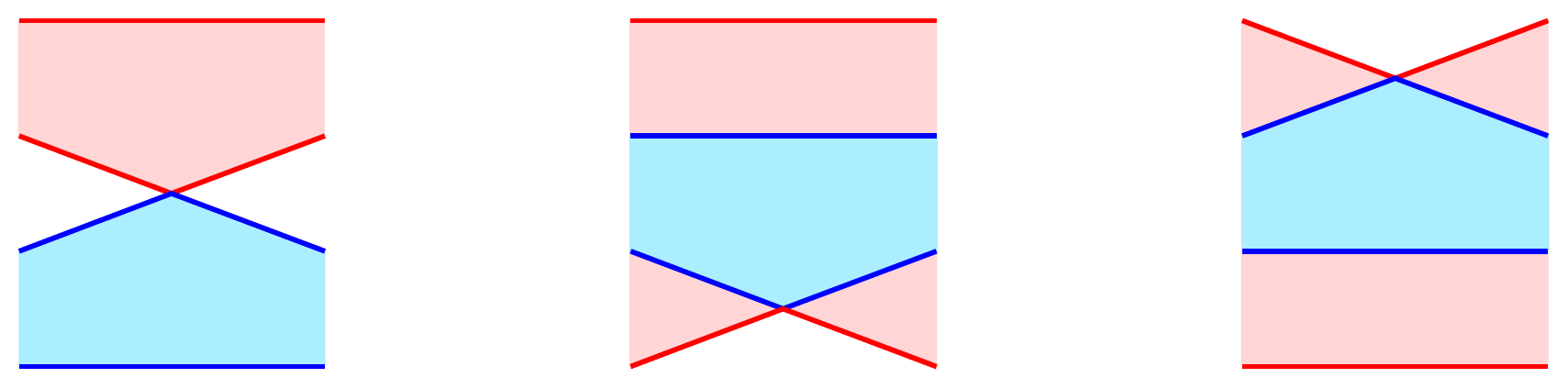}
\caption{Normal rulings near a switch.}
\label{fig:rul}
\end{figure}

As an illustration, all normal rulings of a particular Legendrian trefoil are shown in Figure~\ref{fig:trefrul}.

\begin{figure}[!ht]
\includegraphics[width=4in]{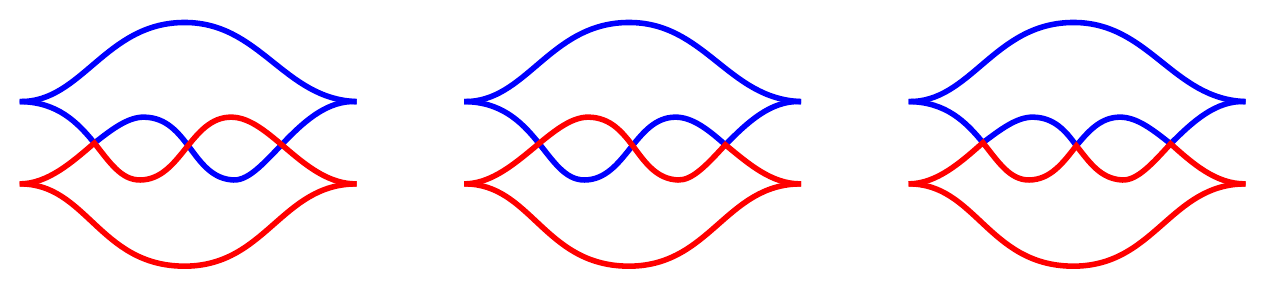}
\caption{All normal rulings of this max $tb$ positive Legendrian trefoil.}
\label{fig:trefrul}
\end{figure}

For each normal  ruling $R$, let $s(R)$ and $d(R)$ be the number of switches and number of disks, respectively. 
By \cite{CP}, the {\bf ruling polynomial} is
$$R_{\Lambda}(z)= \sum_{R} z^{s(R)-d(R)},$$ where the sum is over all the normal rulings,  is an invariant of $\Lambda$ under Legendrian isotopy. Normal rulings and augmentations are closely related even though they are defined in very different ways \cite{Fuchs, FI, NS, Sa}. 
 
We have the following obstruction to decomposable cobordisms in terms of  normal rulings.
\begin{theorem} If $\leg_{-}$ has $m$ normal rulings and $\leg_{+}$ has $n$ normal rulings with $m > n$,  then there is no decomposable
Lagrangian cobordism from $\leg_{-}$ to $\leg_{+}$.
\end{theorem}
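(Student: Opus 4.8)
The plan is to show that the number of normal rulings is \emph{non-decreasing} as one reads a decomposable cobordism from bottom to top; the theorem then follows immediately by contraposition. Write a decomposable cobordism as a stack $\leg_-=\leg_0\to\leg_1\to\cdots\to\leg_k=\leg_+$, in which each arrow is an isotopy, a $0$-handle, or a $1$-handle, and let $r(\leg_i)$ denote the number of normal rulings of $\leg_i$. I would establish that $r(\leg_i)\le r(\leg_{i+1})$ for every elementary step; chaining these inequalities gives $m=r(\leg_-)\le r(\leg_+)=n$, contradicting $m>n$. It is convenient to note that $r(\leg_i)=R_{\leg_i}(1)$, since setting $z=1$ in $R_{\leg_i}(z)=\sum_R z^{s(R)-d(R)}$ simply counts the rulings.

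First I would dispatch the two easy moves. For an isotopy step, $r$ is unchanged because $R_\Lambda(z)$ is a Legendrian isotopy invariant, hence so is its value at $z=1$. For a $0$-handle, $\leg_{i+1}$ is the split union of $\leg_i$ with a standard maximal-$tb$ unknot $U$ whose front lies in a disjoint disk; since there are no crossings and no shared cusps between the two pieces, a normal ruling of the union is precisely a pair consisting of a normal ruling of each piece, so $r(\leg_{i+1})=r(\leg_i)\cdot r(U)$. As the front of $U$ is a single ``eye'' with one left cusp, one right cusp, and no crossings, it has the unique ruling pairing its two strands ($s=0$, $d=1$), so $r(U)=1$ and $r(\leg_{i+1})=r(\leg_i)$.

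The crux is the $1$-handle step, where the lower slice $\leg_i$ is obtained from the upper slice $\leg_{i+1}$ by pinching two oppositely-oriented parallel strands. Thus near the pinch $\leg_{i+1}$ has two through-strands $U,D$ (with $U$ above $D$), while $\leg_i$ has a right cusp $c_1$ followed by a left cusp $c_2$, and the diagrams agree everywhere else. I would construct an injection from the rulings of $\leg_i$ into those of $\leg_{i+1}$. Given a normal ruling $\rho$ of $\leg_i$, the two strands meeting at $c_1$ must be paired (the two strands at any cusp form the common endpoint of a ruling pair), and likewise at $c_2$; smoothing the cusps to reconstruct $U$ and $D$ therefore forces $U$ to be paired with $D$ near the pinch, and keeping all other pairings and switches verbatim produces a pairing $\hat\rho$ on $\leg_{i+1}$. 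Because the pinch region contains no crossings, no new switch or normality condition is imposed, so $\hat\rho$ is a genuine normal ruling. The map $\rho\mapsto\hat\rho$ is injective: away from the pinch the data is copied unchanged, a ruling in its image is exactly one that pairs $U$ with $D$ there, and from such a ruling $\rho$ is recovered uniquely by re-inserting the cusps. Hence $r(\leg_i)\le r(\leg_{i+1})$.

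I expect the $1$-handle case to be the main obstacle: one must pin down the \emph{direction} of the inequality (which is consistent with Obstruction (4)(a), forcing the augmentation count to grow upward), verify that cusp-smoothing really yields a \emph{normal} ruling while respecting the opposite-orientation constraint on the pinched strands, and confirm injectivity. The isotopy and $0$-handle cases are routine once the reduction to a step-by-step monotonicity statement is in place.
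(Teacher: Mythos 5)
Your proposal is correct and follows essentially the same route as the paper: the paper's proof also establishes that each elementary move (isotopy, $0$-handle, pinch) induces an injection from the normal rulings of the lower slice into those of the upper slice, so the ruling count is non-decreasing from $\leg_-$ to $\leg_+$. Your write-up simply supplies the details the paper delegates to a figure, and the one point you flag as delicate (that cusp-smoothing yields a genuine normal ruling) does go through, since the two ruling disks meeting the new cusps occupy disjoint $x$-intervals and hence concatenate without creating self-intersections.
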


\begin{proof} 
One can compare the number of normal  rulings of the two ends for the decomposable moves, as shown in Figure~\ref{fig:ruling}. 
Thus any normal  ruling of $\Lambda_-$ induces a normal  ruling of $\Lambda_+$. 
Different normal rulings of $\Lambda_-$ induce different normal rulings of $\Lambda_+$.
Therefore the number of normal  rulings of $\Lambda_+$ is bigger than or equal to the number of normal  rulings of $\Lambda_-$.
\begin{figure}[!ht]
\includegraphics[width=2in]{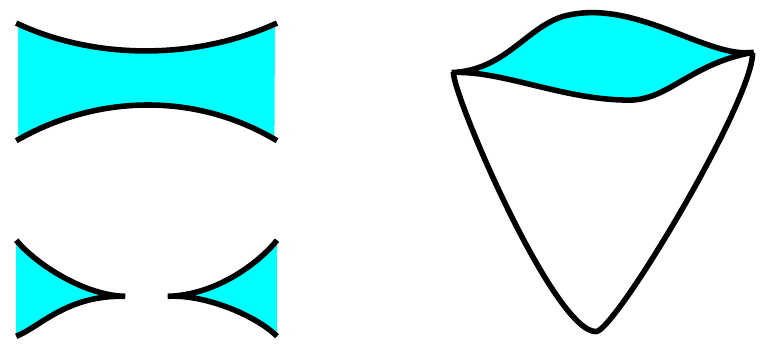}
\caption{Comparison of normal  rulings for decomposable moves.}
\label{fig:ruling}
\end{figure}
\end{proof}

Here is a strategy to  show the 
existence of Legendrians that can be connected by a Lagrangian cobordism but not
by one that is decomposable.

\begin{strat}    \label{strat:rulings} Choose Legendrians $\leg_{\pm}$ such that:
\begin{enumerate}
\item$\leg_{+}$ has fewer graded normal rulings than $\leg_{-}$, and
\item it is possible to construct, via a combination of the combinatorial constructions from Section~\ref{sec:construct}  
or the satellite construction from Section~\ref{sec:satellites}, a Lagrangian cobordism from
$\leg_{-}$ to $\leg_{+}$.   
\end{enumerate}
\end{strat}

\begin{remark} If $\leg_{\pm}$ admit normal rulings, they will admit augmentations \cite{FI, Sa}.
From Section~\ref{ssec:obstructions} obstructions (4)b, we then know that if there is a Lagrangian cobordism from $\Lambda_-$ to $\Lambda_+$, their ruling polynomials satisfy 
$$R_{\Lambda_-}(q^{1/2}-q^{-1/2})\leq q^{-\chi(\Sigma)/2}R_{\Lambda_+}(q^{1/2}-q^{-1/2}),$$  for any $q$ that is a  power of a prime number.
Satisfying condition (1) in Strategy~\ref{strat:rulings} means that the polynomial on the  right side of the inequality has fewer terms than the polynomial on the left side of the inequality.
 If following this approach, it may be helpful to start by  first finding a pair of  positive integer coefficient polynomials that satisfy this inequality and condition (1) at the same time. 
 One can start with checking the ruling polynomials of small crossing number Legendrian knots on \cite{ChNg}.
\end{remark}

\subsection{Candidates for Non-decomposable Lagrangian Concordances from Topology}  

Observe that any decomposable Lagrangian concordance will be a smooth ribbon concordance.  Thus it is potentially possible to use known
obstructions to ribbon concordances to find examples of smooth knots whose Legendrian representatives cannot be connected by a decomposable Lagrangian concordance: 
constructing a Lagrangian concordance between very stabilized Legendrian representatives of these knot types, via the combinatorial techniques of Section~\ref{sec:construct} or geometric techniques of
 Section~\ref{sec:satellites},
 will give an example of an exact Lagrangian concordance between knots that cannot be connected by a decomposable Lagrangian concordance.

For example, it is known \cite{Gor, Zem, LZ} that the only knot that admits a ribbon 
concordance to the unknot is the unknot itself.  This has as a corollary the following
obstruction to a decomposable Lagrangian concordance.

\begin{theorem}[{\cite[Theorem 3.2]{CNS}}]  \label{thm:cns}  If $\leg_-$ is topologically non-trivial and $\leg_{+}$ is topologically an unknot, then
there is no decomposable Lagrangian concordance  from $\leg_{-}$ to $\leg_{+}$.
\end{theorem}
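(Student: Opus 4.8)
The plan is to argue by contradiction and reduce the statement to a purely topological rigidity fact about ribbon concordances to the unknot, which I take as an imported black box. So suppose there were a decomposable Lagrangian concordance $L$ from $\leg_-$ to $\leg_+$, and let $K_\pm$ denote the smooth knot types underlying $\leg_\pm$, where $K_+$ is the unknot by hypothesis. The first step is to pass from the Lagrangian world to the smooth world: since $L$ is a genus-zero Lagrangian cobordism between knots, forgetting the Lagrangian condition and compactifying (replacing $\R^3$ by $S^3$ and truncating the $\R_t$-factor to a compact interval, so that the cylindrical ends become $K_\pm$ at the two boundary levels) produces a smoothly embedded annulus in $S^3 \times [0,1]$, i.e. an ordinary smooth concordance from $K_-$ to $K_+$.

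The substantive local step is to upgrade this smooth concordance to a \emph{ribbon} concordance, and here the decomposability of $L$ is used in an essential way. By definition $L$ is a stacking of elementary pieces — Legendrian isotopies, $0$-handles, and $1$-handles — with no $2$-handle ever appearing, since there is no elementary move corresponding to a maximum. Taking the $\R_t$-coordinate as a Morse function on the associated smooth concordance, the $0$-handles (births) contribute only index-$0$ critical points, the $1$-handles (pinch moves) only index-$1$ critical points, and isotopies none; hence the height function has no index-$2$ critical points. This is exactly the defining condition for a ribbon concordance from $K_-$ to $K_+$, making precise the observation recorded just before the statement that a decomposable Lagrangian concordance is smoothly a ribbon concordance.

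Finally, since $K_+$ is the unknot, $K_-$ admits a ribbon concordance to the unknot, so by the theorem of Gordon, Zemke, and Levine--Zemke \cite{Gor, Zem, LZ} we conclude that $K_-$ is itself the unknot, contradicting the assumption that $\leg_-$ is topologically nontrivial. The genuinely deep input is entirely imported, namely the rigidity theorem \cite{Gor, Zem, LZ}, whose proof passes through the functoriality of Heegaard Floer / Khovanov-type invariants under ribbon concordance and the injectivity of the induced maps; I would not attempt to reprove it. Within the present argument the only point that demands care — and the place where I expect the real content to sit — is the ribbon reduction of the second paragraph, and it is precisely there that \emph{decomposability} (rather than merely being Lagrangian) is used: the analogous statement for arbitrary smooth concordances is false, since many nontrivial knots are slice, so it is the absence of $2$-handles, i.e. the ribbon structure, that does all the work.
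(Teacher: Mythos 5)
Your proof is correct and follows essentially the same route as the paper: the paper derives this theorem precisely by observing that a decomposable Lagrangian concordance (built only from births, saddles, and isotopies, with no maxima) is smoothly a ribbon concordance, and then invoking the result of \cite{Gor, Zem, LZ} that the only knot ribbon concordant to the unknot is the unknot itself. The only minor inaccuracy is your aside about the black box: for the unknot case Gordon's original argument is group-theoretic (injectivity on $\pi_1$ plus the fact that a knot with abelian group is trivial) rather than Floer- or Khovanov-theoretic, but this does not affect the validity of your argument.
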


\begin{example}
To illustrate this theorem, here is a possible low crossing number Legendrian knot to examine as $\leg_{-}$. Consider the topological knot $6_1$ which is slice and ribbon. Its maximum $tb$ Legendrian representative $\Lambda_{6_1}$ (see Figure~\ref{fig:6_1}) has $tb=-5$ and $r=0$. 
The DGA of this Legendrian $\alg(\Lambda_{6_1})$ admits an augmentation, and thus  $\leg_{6_{1}}$ does not admit a Lagrangian cap; see obstructions (6) in Section~\ref{ssec:obstructions}.  
Since we are trying to construct
a Legendrian $\leg_-$ that could be Lagrangian concordant to a stabilized unknot, which might have a Lagrangian cap, we will add some stabilizations that will prevent augmentations and thereby allow the
possibility of a Lagrangian cap.
If we now  add a positive and a negative stabilization to $\Lambda_{6_1}$, we get a knot $\Lambda_{6_1}^\pm$ with $tb=-7$ and $r=0$, which has no augmentation and is still topologically the
knot $6_1$. If, by a sequence of moves in Section~\ref{sec:construct}, one can construct a concordance from $\Lambda_{6_1}^\pm$ to the $tb=-7$ stabilized unknot, then by Theorem ~\ref{thm:cns} this
Lagrangian concordance will not be decomposable; see Figure~\ref{fig:stab6_1}.  In fact, one can stabilize $\Lambda_{6_1}$ {\it as many times as we wish} resulting in $tb(\leg_{-}) = t$ and $r(\leg_{-}) = r$ and try, using the combinatorial constructions of Section~\ref{sec:construct}, to construct a Lagrangian concordance to $\leg_{+}$, where  $\leg_{+}$  is a 
Legendrian unknot with $tb(\leg_{+}) = t$ and $r(\leg_{+}) = r$.  {\it If possible}, such a construction would prove the existence of a non-decomposable Lagrangian concordance.
\end{example}
\begin{figure}[!ht]
	\includegraphics[width=2in]{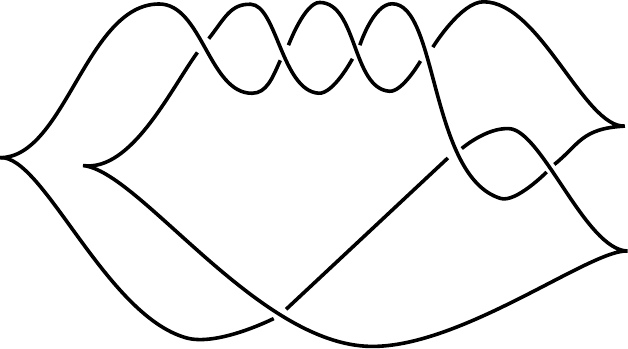}
	\caption{Front diagram of $\Lambda_{6_1}$.}
	\label{fig:6_1}
\end{figure}
\begin{figure}[!ht]
	\includegraphics[width=3in]{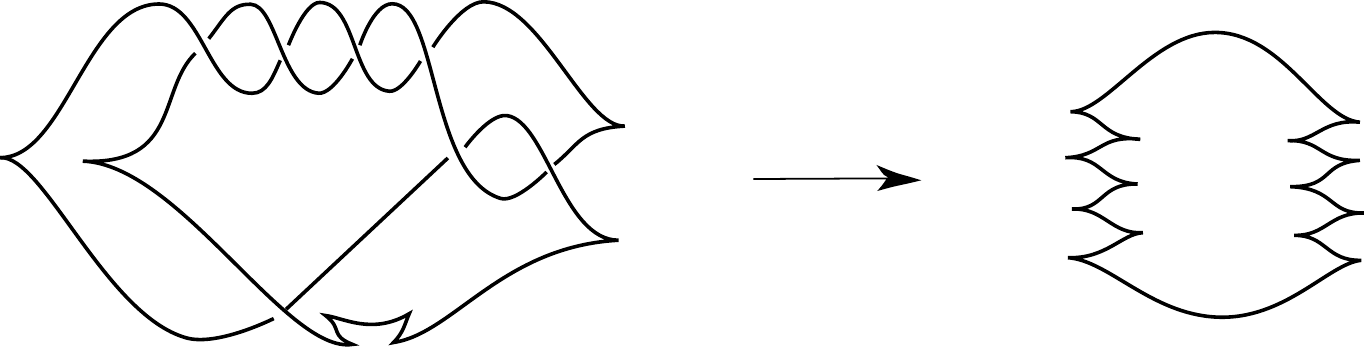}
	\caption{Any Lagrangian concordance from the doubly stabilized $\Lambda_{6_1}$ to the  $tb=-7$, $r = 0$ Legendrian unknot would necessarily be
	non-decomposable.}
	\label{fig:stab6_1}
\end{figure}

There are additional results from topology that give obstructions to the existence of ribbon concordances.  For example, 
as shown by Gilmer \cite{Gil} and generalized by Friedl and Powell \cite{FP}, if $K_-$ is ribbon concordant to $K_+$, then the Alexander polynomial of $K_-$ divides the Alexander polynomial of $K_+$.  
We can invoke these results in a strategy to show the existence of non-decomposable Lagrangian concordances.
\begin{strat}  \label{strat:not-ribbon}
\begin{enumerate}
\item Use results from smooth topology  to find examples of smooth knots $K_{\pm}$ such that $K_-$ is not ribbon concordant to $K_+$. 
\item For {\it any} pair of Legendrian representatives $\leg_{\pm}$ of the knot type $K_{\pm}$, even highly stabilized, use a combination of the combinatorial moves
described in Section~\ref{sec:construct}   
to construct a Lagrangian concordance from $\leg_{-}$ to $\leg_{+}$.
 \end{enumerate}
\end{strat}
The example with the knot $6_1$ given above is a concrete example to try to apply this strategy with $K_-=6_1$ and $K_+$  being an unknot. A possible example when $K_+$ is non-trivial is the following.
\begin{example}
Let $K_{-}$ be the connect sum of the right- and left-handed trefoils, $K_-=T_r\#T_l$, and let $K_{+}$ be the connect sum of the figure 8 knot with itself, $K_+=F_8\#F_8$. 
These knots are concordant but there is no ribbon concordance from $K_-$ to $K_+$, as first shown by Gordon \cite{Gor}.  Choose Legendrian representatives $\leg_{\pm}$ of $K_{\pm}$ such that 
$tb(\leg_{-}) = tb(\leg_{+})$ and $r(\leg_{-}) = r(\leg_{+})$; note that $\leg_{\pm}$ can be very stabilized.  {\it If} we can construct a Lagrangian concordance from $\leg_{-}$ to $\leg_{+}$, via 
the combinatorial moves of Section~\ref{sec:construct}, then we will have shown the existence of a pair of Legendrians that are (exactly, orientably) Lagrangian concordant but cannot be connected by a
decomposable Lagrangian concordance.  
\end{example}

\begin{remark} Some known obstructions to ribbon concordance  are, in fact, obstructions to generalizations of ribbon concordance, namely {\bf strong homotopy ribbon concordance} and {\bf homotopy ribbon concordance}. A strong homotopy ribbon concordance is one whose complement is ribbon, i.e., can be built with only 1-handles and 2-handles. A homotopy ribbon concordance from $K_-$ to $K_+$ is a concordance where the induced map on $\pi_1$ of the complement of $K_-$ (resp. $K_+$) injects (resp. surjects) into $\pi_1$ of the complement of the concordance. Gordon \cite{Gor} showed that 
$$\begin{aligned}
\text{ribbon concordant } & \implies \text{ strong homotopy ribbon concordant} \\ &\implies \text{ homotopy ribbon concordant. }
\end{aligned}
$$
There have been a number of recent results  obstructing (homotopy or strong homotopy) ribbon concordances from Heegaard-Floer and Khovanov homology \cite{Zem, LZ, MZ, GL};
these results play an important role in Strategy~\ref{strat:not-ribbon}.
\end{remark}

\subsection{Candidates for Non-decomposable Lagrangian Cobordisms from GRID Invariants} \label{ssec:Floer}
Some candidates for non-decomposable Lagrangian cobordisms of higher genus come from knot Floer homology.
Using the grid  formulation of knot Floer homology \cite{OST}, 
Ozsv\'ath, Szab\'o, and Thurston defined Legendrian invariants of a Legendrian link  $\leg \subset \R^3$, called GRID invariants,
which are  elements
  in the {\it hat} flavor of knot Floer homology of $\leg \subset -S^3$:
$$\widehat\lambda^+(\leg), \widehat\lambda^-(\leg) \in \widehat{HFK}(-S^3, \leg).$$
For more background, see \cite{OST, MOS}.

Baldwin, Lidman, and Wong \cite{BLW} have shown that these GRID invariants can be used to obstruct the existence of decomposable Lagrangian
cobordisms.
\begin{theorem}[{\cite[Theorem 1.2]{BLW} \label{thm:blw}}]
 Suppose that $\leg_{\pm}$ are Legendrian links in $\R^{3}$ such that either
\begin{enumerate}
\item $\widehat\lambda^+(\leg_{+}) = 0$ and $\widehat\lambda^+(\leg_{-}) \neq 0$, or
\item $\widehat\lambda^-(\leg_{+}) = 0$ and $\widehat\lambda^-(\leg_{-}) \neq 0$. 
\end{enumerate}
Then there is no decomposable Lagrangian cobordism from $\leg_{-}$ to $\leg_{+}$.
\end{theorem}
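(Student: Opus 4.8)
The plan is to establish that the GRID invariants are \emph{natural} under the elementary cobordism maps, in the direction opposite to the cobordism, and then to obtain the obstruction by a contrapositive argument. Suppose $L$ is a decomposable Lagrangian cobordism from $\leg_-$ to $\leg_+$. By definition it decomposes as a stack of elementary cobordisms, yielding a sequence of Legendrian links
$$\leg_- = \leg_0 \to \leg_1 \to \cdots \to \leg_n = \leg_+,$$
in which each step $\leg_i \to \leg_{i+1}$ is a Legendrian isotopy, a $0$-handle, or a $1$-handle (pinch) move. The goal is to produce, for each such elementary move, a map on knot Floer homology
$$F_i : \widehat{HFK}(-S^3, \leg_{i+1}) \to \widehat{HFK}(-S^3, \leg_i)$$
carrying $\widehat\lambda^{\pm}(\leg_{i+1})$ to $\widehat\lambda^{\pm}(\leg_i)$ up to multiplication by a unit. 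Composing the $F_i$ then gives a map $F$ sending $\widehat\lambda^{\pm}(\leg_+)$ to a unit multiple of $\widehat\lambda^{\pm}(\leg_-)$; since a unit multiple of $0$ is $0$, the vanishing $\widehat\lambda^{\pm}(\leg_+) = 0$ forces $\widehat\lambda^{\pm}(\leg_-) = 0$, contradicting the hypotheses in either case (1) or (2). This is precisely the contrapositive of the assertion.

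It then remains to produce the maps $F_i$ for the three elementary moves. For a Legendrian isotopy, $\widehat\lambda^{\pm}$ is a Legendrian isotopy invariant by its construction in \cite{OST}, so the induced map is an isomorphism taking GRID invariants to GRID invariants. For a $0$-handle, which introduces a disjoint maximal-$tb$ Legendrian unknot $U$, I would use the behavior of grid homology under disjoint union, a K\"unneth-type splitting as in \cite{MOS, OST}: the unknot contributes a distinguished generator representing a unit, and the projection removing $U$ sends $\widehat\lambda^{\pm}(\leg_i \cup U)$ to $\widehat\lambda^{\pm}(\leg_i)$. The essential and most delicate case is the $1$-handle (pinch) move, where $\leg_i$ and $\leg_{i+1}$ differ by a saddle on two oppositely-oriented strands, and the two resolutions fit into the \emph{unoriented skein exact triangle} in knot Floer homology. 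I would take $F_i$ to be the saddle cobordism map appearing in this triangle and verify that it respects the GRID invariants.

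The hard part will be this last verification: that the combinatorially defined saddle map sends the distinguished cycle representing $\widehat\lambda^{\pm}(\leg_{i+1})$ to the one representing $\widehat\lambda^{\pm}(\leg_i)$. The strategy is to pass to grid diagrams, represent the pinch as an explicit local modification of a grid presentation, and identify the canonical generators $\mathbf{x}^{\pm}$ whose homology classes are $\widehat\lambda^{\pm}$. One then computes the saddle chain map --- defined through counts of the relevant rectangles in the grid complex --- on these generators and checks that $\mathbf{x}^{\pm}(\leg_{i+1})$ maps to $\mathbf{x}^{\pm}(\leg_i)$. The genuine work lies in the bookkeeping of signs and of the $U$-variables when specializing to the hat flavor, together with confirming orientation compatibility, so that the pinch of oppositely-oriented strands matches the orientation conventions underlying $\widehat\lambda^{+}$ and $\widehat\lambda^{-}$. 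Granting these naturality statements, the theorem follows from the composition-and-contrapositive argument above.
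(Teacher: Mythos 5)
The paper does not prove this theorem; it is quoted verbatim from Baldwin--Lidman--Wong \cite[Theorem 1.2]{BLW}, so there is no in-paper proof to compare against. Your outline is, however, exactly the strategy of the cited source: decompose the cobordism into elementary pieces, construct for each piece a map $\widehat{HFK}(-S^3,\leg_{i+1})\to\widehat{HFK}(-S^3,\leg_i)$ (note the direction reversal, which you have right and which matches the LOSS/GRID functoriality conventions), show each map carries $\widehat\lambda^{\pm}$ to $\widehat\lambda^{\pm}$, compose, and take the contrapositive. The isotopy and $0$-handle cases are handled essentially as you describe (invariance under Legendrian isotopy, and a K\"unneth-type splitting for the disjoint unknot, with the usual extra tensor factor for links). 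The one caveat is that your proposal defers rather than executes the step where all of the mathematical content lives: the verification that the combinatorial saddle map respects the canonical grid cycles $\mathbf{x}^{\pm}$ under a pinch of oppositely oriented strands. You correctly flag this as the hard part, but as written it is a plan, not a proof; in \cite{BLW} this local grid computation (together with the orientation bookkeeping you mention) occupies the bulk of the argument. Granting that lemma, your composition-and-contrapositive argument is complete and correct.
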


\begin{remark} 
By \cite{BVV}, in the standard contact manifold $\R^3$, the GRID invariants agree with the LOSS invariant \cite{LOSS}.
The LOSS invariant is functorial on Lagrangian concordances by \cite{BSa, BS}.
Thus Theorem~\ref{thm:blw} would also obstruct the existence of general Lagrangian concordances and not only the decomposable ones.
To find non-decomposable cobordisms using obstructions from \cite{BLW}, we should focus on  non-zero genus cobordisms. 
\end{remark}

Using the facts that the GRID invariants are non-zero for the $tb = -1$ Legendrian unknot 
and that $\widehat\lambda^+(\leg_{+})$ (resp. $\widehat\lambda^-(\leg_{+})$) vanish for positively (negatively) stabilized Legendrian links,
Theorem~\ref{thm:blw} gives the following corollary.
\begin{corollary}[{\cite[Corollaries 1.3, 1.4]{BLW}}] \label{cor:BLW}
\begin{enumerate}
\item If $\leg \subset \R^{3}$ is a Legendrian link such that $\widehat\lambda^+(\leg) = 0$ or $\widehat\lambda^-(\leg) = 0$, then there is no 
decomposable Lagrangian filling of $\leg$.
\item Suppose $\leg_{\pm}$ are Legendrian links such that either
\begin{enumerate}
\item  $\widehat\lambda^+(\leg_{-}) \neq 0$ and $\leg_{+}$ is the positive stabilization of a Legendrian link, or
\item  $\widehat\lambda^-(\leg_{-}) \neq 0$ and $\leg_{+}$ is the negative stabilization of a Legendrian link.
\end{enumerate}
Then there is no decomposable Lagrangian cobordism from $\leg_{-}$ to $\leg_{+}$.

\end{enumerate}
\end{corollary}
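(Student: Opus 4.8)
The plan is to derive both statements directly from Theorem~\ref{thm:blw}, using the two auxiliary facts recalled just before the corollary: that the $tb=-1$ Legendrian unknot $U$ satisfies $\widehat\lambda^+(U) \neq 0$ and $\widehat\lambda^-(U) \neq 0$, and that a positive (resp.\ negative) stabilization kills $\widehat\lambda^+$ (resp.\ $\widehat\lambda^-$). Part (2) is the more immediate of the two, so I would dispatch it first, then handle part (1) by a short structural reduction.

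For part (2)(a), I would simply observe that since $\leg_+$ is a positive stabilization we have $\widehat\lambda^+(\leg_+) = 0$, while by hypothesis $\widehat\lambda^+(\leg_-) \neq 0$. This is precisely case (1) of Theorem~\ref{thm:blw}, which then rules out a decomposable cobordism from $\leg_-$ to $\leg_+$. Part (2)(b) is identical after replacing $\widehat\lambda^+$ by $\widehat\lambda^-$ and ``positive'' by ``negative'', invoking case (2) of the theorem. So part (2) needs no new idea beyond matching the hypotheses to the theorem.

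For part (1) the subtlety is that a filling has $\leg_- = \emptyset$, whereas Theorem~\ref{thm:blw} is stated for a cobordism between two links, so it cannot be applied verbatim. The key structural observation is that any decomposable filling, written $\emptyset = \leg_0 \to \leg_1 \to \cdots \to \leg_n = \leg$, must begin with a $0$-handle: by the list of elementary moves in Section~\ref{ssec:decomp-moves}, neither a Legendrian isotopy nor a $1$-handle (pinch) move can act on the empty diagram. Hence $\leg_1$ is a standard $tb=-1$ Legendrian unknot $U$ (with the disk-disjointness condition trivially met, since nothing else is present yet), and truncating the filling below $\leg_1$ exhibits a decomposable Lagrangian cobordism from $U$ to $\leg$. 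I would then feed this truncated cobordism into Theorem~\ref{thm:blw}: since $\widehat\lambda^+(U) \neq 0$ and $\widehat\lambda^-(U) \neq 0$, the hypothesis $\widehat\lambda^+(\leg) = 0$ (resp.\ $\widehat\lambda^-(\leg) = 0$) puts us in case (1) (resp.\ case (2)) with $\leg_- = U$ and $\leg_+ = \leg$, contradicting the existence of the truncated cobordism; the degenerate case $n=1$ is excluded since there both invariants of $\leg=U$ are nonzero.

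The only genuine content, and thus the step I expect to require the most care, is this reduction in part (1): justifying that the first elementary piece of a decomposable filling is forced to be a $0$-handle producing a $tb=-1$ unknot, so that the truncation is itself a \emph{bona fide} decomposable cobordism with non-empty negative end to which Theorem~\ref{thm:blw} legitimately applies. Everything else is a direct bookkeeping match between the vanishing/non-vanishing hypotheses and the two cases of the theorem.
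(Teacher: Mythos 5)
Your proposal is correct and follows essentially the same route the paper intends: the paper's one-sentence justification before the corollary (non-vanishing of the GRID invariants for the $tb=-1$ unknot, vanishing under the corresponding stabilization, then Theorem~\ref{thm:blw}) is exactly your argument, with your part (1) merely making explicit the reduction that a decomposable filling must open with a $0$-handle creating a $tb=-1$ unknot $U$, so that the truncation is a decomposable cobordism from $U$ to $\leg$ to which the theorem applies. No gaps; the extra care you flag in part (1) is warranted but handled correctly.
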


This provides another strategy to show the existence of Legendrians $\leg_{\pm}$ that are  Lagrangian cobordant
but cannot be connected by a decomposable Lagrangian cobordism.
 \begin{strat}\label{strat:blw}
 \begin{enumerate}
\item Find Legendrians $\leg_{\pm}$ satisfying the GRID invariants conditions of Corollary~\ref{cor:BLW} and Theorem~\ref{thm:blw}  such that there are no known obstructions, as described in Section~\ref{ssec:obstructions}, to the existence
of a Lagrangian cobordism from $\leg_{-}$ to $\leg_{+}$.
\item Use a combination of the combinatorial moves described in Section~\ref{sec:construct} to construct a 
Lagrangian cobordism  from $\leg_{-}$ to $\leg_{+}$. 
\end{enumerate}
\end{strat}
\begin{example}
Concrete examples mentioned in \cite[Section 4.1]{BLW}  can be used for Strategy~\ref{strat:blw}.  
Let $\Lambda_0, \Lambda_1$ be the Legendrian $m(10_{132})$ knots and  Legendrian $m(12{n_{200}})$ knots shown in  \cite[Figures 2 and 3]{NOT}.  Modify them with a pattern shown in \cite[Figure 13]{BLW} to get $\Lambda'_0$ and $\Lambda'_1$, which are of  knot type $m(12n_{199})$ and $m(14n_{5047})$ (or its mirror), respectively.
For $i,j=0,1$ we have $tb(\Lambda'_i)=tb(\Lambda_i)+2$ and $r(\Lambda'_i)=r(\Lambda_i)$. There is no decomposable
Lagrangian cobordism from
\begin{enumerate}
\item  $\Lambda_0$ to $\Lambda'_1$, or
\item  $\Lambda_1$ to $\Lambda'_0$.
\end{enumerate}
{\it If} we can construct, using the combinatorial techniques of Section~\ref{sec:construct}, a Lagrangian cobordism (necessarily of genus $1$) from $\Lambda_{0}$ to $\Lambda'_{1}$ or from $\Lambda_1$ to $\Lambda'_0$, then we will have found a
non-decomposable Lagrangian cobordisms. 
\end{example}

\begin{example} 
In \cite[Section 4.3]{BLW}, the authors provide an infinite family of pairs of Legendrian knots where there does not exist a decomposable Lagrangian cobordism between them.
\end{example}
 
\begin{remark} In Strategies~\ref{strat:not-ribbon} and~\ref{strat:blw}, we emphasized the construction of Lagrangian cobordisms using the 
combinatorial techniques of Section~\ref{sec:construct}.  It would be interesting to know if the geometric constructions of Section~\ref{sec:satellites} could also
be used to show the existence of a Lagrangian concordance/cobordism  from the theory of normal  rulings, topology, or grid invariants, that are known to not  be 
decomposable.
\end{remark}

\subsection{Non-decomposable Candidates through Surgery}
 An additional strategy to show the existence of a non-decomposable Lagrangian filling comes from understanding properties
of the contact manifold that is obtained from surgery on the Legendrian knot.  In particular, Conway, Etnyre, and Tosun \cite{CET}
have detected a relationship between Lagrangian fillings of a Legendrian and symplectic fillings of the contact manifold
obtained by performing a particular type of surgery on the Legendrian.

\begin{theorem}[{\cite[Theorem 1.1]{CET}}]
 There is a Lagrangian disk filling of $\leg_{+} $ if and only if  the contact $+1$-surgery on $\leg_{+} \subset \mathbb R^{3} \subset S^{3}$
produces a contact manifold that is  strongly symplectically fillable.
If $\leg_{+}$ has a decomposable Lagrangian filling, then the filling can be taken to be Stein. 
\end{theorem}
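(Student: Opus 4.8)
The plan is to establish a single geometric correspondence relating Lagrangian disks to contact surgery, and then read it in both directions. \textbf{Core construction.} Suppose $L$ is a Lagrangian disk properly embedded in the standard symplectic ball $(B^4,\omega_0)$ with Legendrian boundary $\partial L = \leg_{+} \subset (S^3,\xi_{std})$, meeting $S^3$ in the standard cylindrical fashion. I would take a Weinstein tubular neighborhood $\nu(L)$ modeled on the disk cotangent bundle $D^*D^2$ with its canonical symplectic structure and set $W := B^4 \setminus \nu(L)$. The expected key lemma is that $W$ is a strong symplectic filling whose contact boundary is obtained from $(S^3,\xi_{std})$ by surgery along $\leg_{+}$, with induced coefficient \emph{exactly} $+1$; that is, $\partial W = S^3_{+1}(\leg_{+})$ as a contact manifold. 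The coefficient is forced by comparing the framing of $\leg_{+}$ coming from the Lagrangian disk with the contact framing, the two differing by the $\pm 1$ that produces the $+1$-surgery. Dually, gluing $\nu(L)$ back onto a filling of $S^3_{+1}(\leg_{+})$ is precisely the attachment of a Weinstein $2$-handle, i.e.\ contact $(-1)$-surgery on the dual Legendrian, which cancels the $+1$-surgery.

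\textbf{The two directions.} For the forward implication, given a Lagrangian disk filling of $\leg_{+}$ in the symplectization, I would first cap the negative end of $(\R\times S^3, d(e^t\alpha))$ with the standard ball so as to place $L$ inside $(B^4,\omega_0)$ with $\partial L = \leg_{+}$; removing $\nu(L)$ then exhibits $W = B^4 \setminus \nu(L)$ as a strong filling of $S^3_{+1}(\leg_{+})$. For the converse, suppose $(W_0,\omega)$ strongly fills $S^3_{+1}(\leg_{+})$. I would attach the standard piece $\nu(L)$ to $\partial W_0$, equivalently a Weinstein $2$-handle along the dual Legendrian; this cancels the $+1$-surgery and produces a strong filling $W_0 \cup \nu(L)$ of $(S^3,\xi_{std})$ that \emph{contains} a Lagrangian disk $L$, namely the cocore of the handle, with $\partial L = \leg_{+}$. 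By Gromov's theorem on fillings of $(S^3,\xi_{std})$ \cite{Gro}, this filling is, after deformation and blow-down, symplectomorphic to the standard ball; carrying $L$ through this symplectomorphism, kept disjoint from any blow-up regions, yields a Lagrangian disk in $(B^4,\omega_0)$ with boundary $\leg_{+}$, hence a Lagrangian disk filling.

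\textbf{Stein refinement.} For the final sentence I would track handles: a decomposable Lagrangian disk filling is assembled from a single $0$-handle together with $1$-handle (pinch) moves and no maxima, so that each elementary move corresponds to a Weinstein handle in the complement $W$. The decomposable structure therefore upgrades $W$ to a Weinstein cobordism, and by Eliashberg's theorem \cite{CE, Oan} a Weinstein filling admits a compatible Stein structure; hence the filling of $S^3_{+1}(\leg_{+})$ may be taken to be Stein.

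\textbf{Main obstacle.} The crux is the Core construction, and specifically verifying that the surgery coefficient is exactly $+1$: this requires an explicit local symplectic model for a neighborhood of a Lagrangian disk with Legendrian boundary, together with an honest comparison of the Lagrangian and contact framings along $\leg_{+}$. A secondary subtlety is controlling Gromov's symplectomorphism in the backward direction so that the cocore disk is preserved as a Lagrangian and genuinely lands in the standard ball rather than in an a priori exotic filling.
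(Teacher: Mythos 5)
The paper does not prove this statement: it is quoted verbatim as \cite[Theorem 1.1]{CET}, and the survey offers no argument beyond the remark immediately following it (that Stein fillings correspond to \emph{regular} Lagrangian disks and that decomposable fillings are regular). Your proposal is nonetheless essentially the proof given in the cited source: compactify the filling to a Lagrangian disk $L\subset B^4$, observe that $B^4\setminus\nu(L)$ is a strong filling of the contact $+1$-surgery (the disk framing is $0=tb(\leg_+)+1$ since $tb=-\chi(L)=-1$, so the coefficient check you flag does work out); conversely attach a Weinstein $2$-handle along the surgery-dual Legendrian to a strong filling of $S^3_{+1}(\leg_+)$, invoke Gromov--McDuff to identify the result with a blow-up of the standard ball, and recover $L$ as the cocore, keeping the exceptional spheres disjoint from it during blow-down. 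Your Stein refinement also matches the intended route (decomposable $\Rightarrow$ built from Weinstein handles $\Rightarrow$ the disk is regular $\Rightarrow$ the complement is Weinstein, hence Stein by Eliashberg). The two obstacles you name -- the framing computation and controlling the symplectomorphism so the cocore survives into the standard ball -- are exactly the points where the details live, so your outline is sound but would need those filled in to be a complete proof.
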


 In fact, \cite{CET} also shows that a  filling will be a Stein filling if and only if $\leg_{+}$ bounds a {\it regular} Lagrangian disk: 
a Lagrangian disk is regular if there is a Liouville vector field that is tangent to the disk.  Any decomposable Lagrangian filling is regular.

 We now see another strategy  to construct a non-decomposable Lagrangian filling.
 
 \begin{strat} Find a  Legendrian $\leg$ such that the
 $+1$-surgery on $\leg$ produces a contact manifold that is strongly symplectically fillable but does not admit a Stein filling.
 \end{strat}
 
 An issue with this approach is a lack of examples: there are very few manifolds which carry strongly fillable but not Stein fillable contact structures.  
 The main examples are  the $1/n$ surgeries on the positive and negative trefoils; see works by Ghiggini \cite{Ghi} and Tosun \cite{Tosun}.
  However it is not obvious whether any of these contact structures are a contact $+1$ surgery on a Legendrian knot in  $S^{3}$.

 \section{Conclusion}
 The desire to understand the flexibility and rigidity of Lagrangian submanifolds has led to a great deal of interesting research in symplectic topology.  
 Similarly, trying to understand constructions of and obstructions to Lagrangian cobordisms has led to many interesting results.  At this point, we 
 have few concrete answers to the Motivating Questions stated in our Introduction.  In particular, regarding Motivating Question~(\ref{ques:decomp}),
 there are presently many candidates 
for Legendrians $\leg_{\pm}$ that can be connected by a Lagrangian cobordism but not by a decomposable Lagrangian cobordism: by understanding
all the obstructions to Lagrangian cobordisms, one can come up with some good candidates. 
When trying and failing to construct a Lagrangian cobordism between a given pair, one may gain 
  intuition for additional obstructions to Lagrangian cobordisms that are waiting to be discovered.

\bibliographystyle{alpha}
\bibliography{Bibliography}
\end{document}